\titleformat*{\section}{\LARGE\bfseries}
\titleformat*{\subsection}{\Large\bfseries}
\titleformat*{\subsubsection}{\large\bfseries}
\newtheoremstyle{case}{}{}{}{}{}{:}{ }{}
\theoremstyle{case}
\newcommand{\be}{\begin{equation}}
\newcommand{\ee}{\end{equation}}
\newcommand{\ben}{\begin{eqnarray*}}
\newcommand{\een}{\end{eqnarray*}}
\newtheorem{examp}{\sc example}
\newtheorem{remk}{\sc remark}
\newtheorem{corol}{\sc corollary}
\newtheorem{lemma}{\sc lemma}
\newtheorem{theorem}{\sc theorem}
\newtheorem{defn}{\sc definition}
\newcommand{\bt}{\begin{theorem}}
\newcommand{\et}{\end{theorem}}
\newcommand{\bl}{\begin{lemma}}
\newcommand{\el}{\end{lemma}}
\newcommand{\bed}{\begin{defn}}
\newcommand{\eed}{\end{defn}}
\newcommand{\brem}{\begin{remk}}
\newcommand{\erem}{\end{remk}}
\newcommand{\bex}{\begin{examp}}
\newcommand{\eex}{\end{examp}}
\newcommand{\bcl}{\begin{corol}}
\newcommand{\ecl}{\end{corol}}
\newcommand{\NI}{\noindent}
\theoremstyle{definition}
\theoremstyle{remark}
\numberwithin{equation}{section}
\numberwithin{theorem}{section}
\numberwithin{lemma}{section}
\newtheorem{definition}{Definition}[section]
\begin{document}

\title{\large\bf\sc Computation of a new error bound for tensor complementarity problem with P tensor }

\author{R. Deb$^{a,1}$, A. Dutta$^{a,2}$, A. K. Das$^{b,3}$\\
\emph{\small $^{a}$Jadavpur University, Kolkata , 700 032, India.}\\	
\emph{\small $^{b}$Indian Statistical Institute, 203 B. T.
	Road, Kolkata, 700 108, India.}\\
\emph{\small $^{1}$Email: rony.knc.ju@gmail.com}\\
\emph{\small $^{2}$Email: aritradutta001@gmail.com}\\
\emph{\small $^{3}$Email: akdas@isical.ac.in}\\
}

\date{}

\maketitle

\begin{abstract}
\NI We propose a new error bound for the solution of tensor complementarity problem TCP$(q, \mathcal{A})$ given that $\mathcal{A}$ is a $P$-tensor and $q$ is a real vector. We show that the proposed error bound is sharper than the earlier version of error bound available in the literature.  We establish absolute and relative error bound for TCP$(q, \mathcal{A})$ where $\mathcal{A}$ is an even order positive diagonal tensor.\\

\noindent{\bf Keywords:} Tensor complementarity problem, $P$-tensor, global error bound, positively homogeneous operator.
\\

\noindent{\bf AMS subject classifications:} 90C33, 15A69, 65K10.
\end{abstract}
\footnotetext[1]{Corresponding author}

\section{Introduction}
In this article a new error bound sharper than the earlier version is introduced for the tensor complementarity problem with $P$-tensor. During last several years, the tensor complementarity problem attains much attraction and has been studied extensively with respect to theory, to solution methods and applications. In recent years, various tensors with special structures have been studied. For details, see \cite{qi2017tensor} and \cite{song2015properties}. The tensor complementarity problem was studied initially by Song and Qi \cite{song2014properties}.
The tensor complementarity problem is a subclass of the non-linear complementarity problems where the function involved in the non-linear complementarity problem is a special polynomial defined by a tensor in the tensor complementarity problem. The polynomial functions used in tensor complementarity problems have some special structures.
\\
 For a given mapping $F: \mathbb{R}^n \mapsto \mathbb{R}^n$ the complementarity problem is to find a vector $x\in \mathbb{R}^n $ such that
\begin{equation}\label{Classical comp problem}
   x\geq 0, ~~F(x) \geq 0, ~~ \mbox{and}~ x^{T}F(x)=0.
\end{equation}
If $F$ is nonlinear mapping, then the problem (\ref{Classical comp problem}) is called a nonlinear complementarity problem \cite{facchinei2007finite}, and if $F$ is linear function, then the problem (\ref{Classical comp problem}) reduces to a linear complementarity problem \cite{cottle2009linear}. The linear complementarity problem may be defined as follows:\\
Given a matrix $ M\in \mathbb{R}^{n\times n} $ and a vector $q \in \mathbb{R}^n$, the linear complementarity problem \cite{cottle2009linear}, denoted by $LCP(q,M)$, is to find a pair of vectors $w,z \in\mathbb{R}^n$ such that 
\begin{equation}\label{linear complementarity problem}
    z\geq 0, ~~~ w=Mz+q\geq 0, ~~~ z^T w=0.
\end{equation}

\noindent The algorithm based on principal pivot transforms namely Lemke's algorithm, Criss-cross algorithm which are used to find the solutions of linear complementarity problem are studied extensively considering several matrix classes. 

It is important that large number of formulations not only enrich the linear complementarity problem but also generate different matrix classes along with their computational methods.
 For details see \cite{das2019some}, \cite{neogy2006some}, \cite{dutta2021some} \cite{dutta2022on}, \cite{neogy2013weak}, \cite{jana2018semimonotone}, \cite{neogy2005almost}, \cite{neogy2011singular}, \cite{jana2019hidden}, \cite{jana2021more}, \cite{neogy2009modeling}, \cite{das2017finiteness}, \cite{das2018invex}. For details of game theory see \cite{mondal2016discounted}, \cite{neogy2008mathematical}, \cite{neogy2008mixture}, \cite{neogy2005linear}, \cite{neogy2016optimization}, \cite{das2016generalized}, \cite{neogy2011generalized} and for details of QMOP see \cite{mohan2004note}. Even matrix classes arise during the study of Lemke's algorithm as well as principal pivot transform. For details see \cite{mohan2001classes}, \cite{mohan2001more} \cite{neogy2005principal}, \cite{das2016properties}, \cite{neogy2012generalized}, \cite{jana2019hidden}, \cite{jana2021iterative}, \cite{jana2021more}, \cite{jana2018processability}. 
Now we consider the case of $F(x)=\mathcal{A}x^{m-1} + q$ with $\mathcal{A}\in T_{m,n}$ and $ q\in \mathbb{R}^n$ then the problem (\ref{Classical comp problem}) becomes
\begin{equation}\label{ Tensor Complementarity problem}
    x\geq 0, ~~~\mathcal{A}x^{m-1} + q\geq 0, ~~~\mbox{and}~~ x^{T}(\mathcal{A}x^{m-1}+q)=0
\end{equation}
which is called a tensor complementarity problem, denoted by the TCP$(q,\mathcal{A})$.
Denote $\omega=\mathcal{A}x^{m-1} + q$, then the tensor complementarity problem is to find $x$ such that 
\begin{equation}\label{ Tensor Complementarity problem 2}
    x\geq 0, ~~~ \omega= \mathcal{A}x^{m-1} + q\geq 0, ~~~\mbox{and}~~ x^{T}\omega=0.
\end{equation}

\noindent Motivated by the discussion on positive definiteness of multivariate homogeneous polynomial forms \cite{bose1976general}, \cite{hasan1996procedure}, \cite{jury1981positivity}, Qi \cite{qi2005eigenvalues} introduced the concept of symmetric positive definite (positive semi-definite) tensors. Song and Qi \cite{song2015properties} studied $P(P_0)$-tensors and $B(B_0)$-tensors.
The equivalence between (strictly) semi-positive tensors and (strictly) copositive tensors in symmetric case   were shown by Song and Qi \cite{song2016tensor}. The existence and uniqueness of solution of TCP$(q,\mathcal{A})$ with some special tensors were discussed by Che, Qi, Wei \cite{che2016positive}. The boundedness of the solution set of the TCP$(q,\mathcal{A})$ was studied by Song and Yu \cite{song2016properties}. The sparse solutions to TCP$(q,\mathcal{A})$ with a $Z$-tensor and its method to calculate were obtained by Luo, Qi and Xiu \cite{luo2017sparsest}. The equivalent conditions of solution to TCP$(q, \mathcal{A})$ were shown by Gowda, Luo, Qi and Xiu \cite{gowda2015z} for a $Z$-tensor $\mathcal{A}$. The global uniqueness of solution of TCP$(q, \mathcal{A})$ was considered by Bai, Huang and Wang \cite{bai2016global} for a strong $P$-tensor $\mathcal{A}$.
The properties of TCP$(q, \mathcal{A})$ was studied by Ding, Luo and Qi \cite{ding2018p} for a new class of $P$-tensor. The properties of the several classes of $Q$-tensors were presented by Suo and Wang \cite{huang2015q}. In this article we introduce column adequate tensor in the context of tensor complementarity problem and study different properties of this tensor.\\
The paper is organised as follows. Section 2 contains some basic notations and results. In Section 3, We propose a new error bound for the solution of tensor complementarity problem TCP$(q, \mathcal{A})$ given that $\mathcal{A}$ is a $P$-tensor and $q$ is a real vector. We show that the proposed error bound is sharper than the earlier version of error bound available in the literature.  We establish absolute and relative error bound for TCP$(q, \mathcal{A})$ where $\mathcal{A}$ is an even order positive diagonal tensor. The results are illustrated with the help of a numerical example.

\section{Preliminaries}

We begin by introducing some basic notations used in this paper. We consider tensor, matrices and vectors with real entries. Let $m$th order $n$ dimensional real tensor $\mathcal{A}= (a_{i_1 i_2 ... i_m}) $ be a multidimensional array of entries $a_{i_1 i_2 ... i_m} \in \mathbb{R}$ where $i_j \in [n]$ with $j\in [m]$. $T_{m,n}$ denotes the set of real tensors of order $m$ and dimension $n.$ 
For any positive integer $n,$  $[n]$ denotes set of $\{ 1,2,...,n \}$. All vectors are column vectors. Let $\mathbb{R}^n$ denote the $n$-dimensional Euclidean space and $\mathbb{R}^n_+ :=\{ x\in \mathbb{R}^n : x\geq 0 \}$. For any $x \in \mathbb{R}^n$, let $x^{[m]}\in \mathbb{R}^n$ with its $i$th component being $x^m_i$ for all $i \in [n]$. $||x||_{\infty}= \max \{ |x_i|: i\in [n]\}$ and $||x||_2= \left( \sum_{i =1}^n |x_i|^2 \right)^{\frac{1}{2}}.$
Then for a continuous, positively homogeneous operator $T:\mathbb{R}^n\mapsto \mathbb{R}^n$ it is obvious that $||T||_{\infty}= \max_{||x||_{\infty}=1} ||T(x)||_{\infty}$ is an operator norm of $T$ and $||T(x)||_{\infty} \leq ||T||_{\infty} ||x||_{\infty} $ for any $x\in \mathbb{R}^n.$
For $\mathcal{A}\in T_{m,n} $ and $x\in \mathbb{R},\; \mathcal{A}x^{m-1}\in \mathbb{R}^n $ is a vector defined by
\[ (\mathcal{A}x^{m-1})_i = \sum_{i_2, i_3, ...i_m =1}^{n} a_{i i_2 i_3 ...i_m} x_{i_2} x_{i_3} \cdot\cdot \cdot x_{i_m} , ~~~\mbox{for all}~i \in [n] \]
and $\mathcal{A}x^m\in \mathbb{R} $ is a scalar defined by
\[ \mathcal{A}x^m = \sum_{i_1,i_2, i_3, ...i_m =1}^{n} a_{i_1 i_2 i_3 ...i_m} x_{i_1} x_{i_2} \cdot\cdot \cdot x_{i_m}. \]

\NI For any $\mathcal{A}\in T_{m,n},$ 
\begin{equation*}
    || \mathcal{A} ||_{\infty}= \max_{i\in [n]} \sum_{i_2, ..., i_m =1}^n |a_{i i_2, ..., i_m}| .
\end{equation*}
Song et al. \cite{song2015properties} defined two operators. Let $\mathcal{A}\in T_{m,n}.$ For $x\in \mathbb{R}^n,$ the operator $T_{\mathcal{A}} : \mathbb{R}^n \mapsto \mathbb{R}^n$ is defined by 
\begin{equation}
    T_{\mathcal{A}} x = \left\{\begin{array}{rr}
        ||x||_2^{2-m} \mathcal{A} x^{m-1}, & x \neq 0, \\
        0, & x=0.
    \end{array} \;
    \right.
\end{equation}
When $m$ is even, for $x\in \mathbb{R}^n$ another operator $F_{\mathcal{A}}: \mathbb{R}^n \mapsto \mathbb{R}^n$ is defined by 
\begin{equation}
    F_{\mathcal{A}} x = (\mathcal{A} x^{m-1})^{[\frac{1}{m-1}]}.
\end{equation}
Song et al. \cite{song2015properties} introduced two important quantities 
\begin{equation}
    \alpha(T_{\mathcal{A}})= \min_{||x||_{\infty}=1} \max_{i\in [n]} x_i (T_{\mathcal{A}} x)_i
\end{equation}
 for any positive integer $m$ and 
\begin{equation}\label{defn of alphaFA}
    \alpha(F_{\mathcal{A}})= \min_{||x||_{\infty}=1} \max_{i\in [n]} x_i (F_{\mathcal{A}} x)_i
\end{equation} 
when $m$ is even.
\NI Since $P$-tensors are all even order, then $\alpha(T_{\mathcal{A}})$ and $\alpha(F_{\mathcal{A}})$ are both well defined for any $P$-tensor.
The following result is necessary and sufficient conditions for a P-tensor in
terms of $\alpha(T_{\mathcal{A}})$ and $\alpha(F_{\mathcal{A}}).$

\begin{lemma}\cite{song2015properties}
Let $\mathcal{A}\in T_{m,n}.$ Then\\
(a) $\mathcal{A}$ is a $P$-tensor if and only if $\alpha(T_{\mathcal{A}})>0,$\\
(b) When $m$ is even $\mathcal{A}$ is a $P$-tensor if and only if $\alpha(F_{\mathcal{A}})>0.$
\end{lemma}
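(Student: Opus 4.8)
The plan is to reduce both equivalences to the defining property of a $P$-tensor --- that $\max_{i\in[n]} x_i(\mathcal{A}x^{m-1})_i>0$ for every nonzero $x\in\mathbb{R}^n$ --- by observing that neither the positive scalar factor $\|x\|_2^{2-m}$ built into $T_{\mathcal{A}}$ nor the componentwise odd root built into $F_{\mathcal{A}}$ alters the sign of the products $x_i(\cdot)_i$.

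First I would collect three elementary observations. (i) For $x\neq 0$ the scalar $\|x\|_2^{2-m}$ is strictly positive, so $x_i(T_{\mathcal{A}}x)_i=\|x\|_2^{2-m}\,x_i(\mathcal{A}x^{m-1})_i$ has the same sign as $x_i(\mathcal{A}x^{m-1})_i$ for each $i$; hence $\max_i x_i(T_{\mathcal{A}}x)_i>0$ exactly when $\max_i x_i(\mathcal{A}x^{m-1})_i>0$, and therefore $\mathcal{A}$ is a $P$-tensor precisely when $\max_i x_i(T_{\mathcal{A}}x)_i>0$ for all $x\neq 0$. (ii) $T_{\mathcal{A}}$ is continuous and positively homogeneous of degree one, so $x\mapsto x_i(T_{\mathcal{A}}x)_i$ is positively homogeneous of degree two; writing $\hat x=x/\|x\|_\infty$ for $x\neq 0$ this gives $\max_i x_i(T_{\mathcal{A}}x)_i=\|x\|_\infty^{2}\max_i \hat x_i(T_{\mathcal{A}}\hat x)_i$. (iii) The set $S=\{x\in\mathbb{R}^n:\|x\|_\infty=1\}$ is compact with $0\notin S$, and $x\mapsto\max_i x_i(T_{\mathcal{A}}x)_i$ is continuous as a finite maximum of continuous functions, so the minimum defining $\alpha(T_{\mathcal{A}})$ is attained.

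For part (a): if $\mathcal{A}$ is a $P$-tensor then by (i) the continuous function $x\mapsto\max_i x_i(T_{\mathcal{A}}x)_i$ is strictly positive on $S$, and by (iii) its minimum over the compact set $S$ --- which is $\alpha(T_{\mathcal{A}})$ --- is attained and hence positive. Conversely, if $\alpha(T_{\mathcal{A}})>0$ and $x\neq 0$ is arbitrary, then (ii) gives $\max_i x_i(T_{\mathcal{A}}x)_i=\|x\|_\infty^{2}\max_i\hat x_i(T_{\mathcal{A}}\hat x)_i\ge\|x\|_\infty^{2}\,\alpha(T_{\mathcal{A}})>0$, and (i) turns this into $\max_i x_i(\mathcal{A}x^{m-1})_i>0$; since $x\neq 0$ was arbitrary, $\mathcal{A}$ is a $P$-tensor.

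For part (b): when $m$ is even the exponent $m-1$ is odd, so $t\mapsto t^{1/(m-1)}$ is a continuous, strictly increasing, sign-preserving bijection of $\mathbb{R}$. Thus $F_{\mathcal{A}}$ is continuous and positively homogeneous of degree one (pulling the positive scalar $\lambda^{m-1}$ out of $\mathcal{A}(\lambda x)^{m-1}$ yields the factor $\lambda$ after taking the root), and $x_i(F_{\mathcal{A}}x)_i=x_i\big((\mathcal{A}x^{m-1})_i\big)^{1/(m-1)}>0$ precisely when $x_i(\mathcal{A}x^{m-1})_i>0$. These are exactly the substitutes for (i)--(iii) with $F_{\mathcal{A}}$ in place of $T_{\mathcal{A}}$, so the proof of (a) transfers word for word to give that $\mathcal{A}$ is a $P$-tensor if and only if $\alpha(F_{\mathcal{A}})>0$. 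I do not foresee a genuine obstacle: the only points requiring a little care are the sign-and-homogeneity bookkeeping for the odd root in $F_{\mathcal{A}}$ and the fact that the degenerate value $T_{\mathcal{A}}0=0$ never enters, since every quantity is evaluated on $S$ or obtained from it by positive scaling.
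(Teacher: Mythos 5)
Your argument is correct: the sign-preservation observations for the factor $\|x\|_2^{2-m}$ and for the odd root $t\mapsto t^{1/(m-1)}$, combined with degree-two positive homogeneity of $x\mapsto x_i(T_{\mathcal{A}}x)_i$ and compactness of the unit sphere, is exactly the standard proof of this equivalence (the paper itself states the lemma without proof, citing Song and Qi, and their argument is the same). The one small point worth making explicit is that the condition ``$x_i\neq 0$ and $x_i(\mathcal{A}x^{m-1})_i>0$ for some $i$'' in the definition of a $P$-tensor is equivalent to your formulation $\max_{i}x_i(\mathcal{A}x^{m-1})_i>0$, since positivity of the maximizing term already forces $x_i\neq 0$; with that noted, nothing is missing.
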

\NI Since every strong $P$-tensor is a $P$-tensor, the following corollary is obvious.
\begin{corol}\cite{zheng2019global}
Let $\mathcal{A}\in T_{m,n}.$ Then\\
(a) $\alpha(T_{\mathcal{A}})>0$ if $\mathcal{A}$ is a strong $P$-tensor,\\
(b) $\alpha(F_{\mathcal{A}})>0$ if $\mathcal{A}$ is a strong $P$-tensor.
\end{corol}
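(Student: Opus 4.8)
The strategy is to read the corollary off from the preceding lemma once the set inclusion \emph{strong $P$-tensor $\subseteq$ $P$-tensor} is in place; there is no genuinely new analytic content to establish. First I would recall the definition of a strong $P$-tensor and the fact, due to Bai, Huang and Wang \cite{bai2016global}, that every strong $P$-tensor is a $P$-tensor. I would also recall that $P$-tensors are necessarily of even order, so that for a strong $P$-tensor $\mathcal{A}\in T_{m,n}$ the order $m$ is even and both quantities $\alpha(T_{\mathcal{A}})$ and $\alpha(F_{\mathcal{A}})$ introduced above are well defined.

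For part (a): assuming $\mathcal{A}$ is a strong $P$-tensor, it is in particular a $P$-tensor, and part (a) of the preceding lemma immediately gives $\alpha(T_{\mathcal{A}})>0$. For part (b): since $m$ is even and $\mathcal{A}$ is a $P$-tensor, part (b) of the preceding lemma applies and yields $\alpha(F_{\mathcal{A}})>0$. This completes the argument.

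The only point that requires any care — and the closest thing here to an ``obstacle'' — is checking that the hypotheses of each part of the lemma are met: for part (b) one must know that $m$ is even before invoking it, which is automatic because the $P$-tensor property forces the order to be even. Everything else is a direct substitution into the two statements of the lemma, so no estimates or auxiliary results are needed beyond the literature fact that strong $P$-tensors are $P$-tensors.
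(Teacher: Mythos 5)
Your argument is correct and is essentially identical to the paper's own justification, which simply remarks that every strong $P$-tensor is a $P$-tensor and then declares the corollary an immediate consequence of the preceding lemma. Your additional observation that the even-order requirement for part (b) is automatic (since $P$-tensors are necessarily of even order) is a sensible bit of due diligence but does not change the route.
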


\begin{definition}
Let $e : \mathbb{R}^n\mapsto \mathbb{R}^n_+$ be a function. Assume that TCP$(q,\mathcal{A})$ has a nonempty solution set.

\NI (a) We say that $e(x)$ is a residual function of TCP$(q,\mathcal{A})$, if $e(x) \geq 0$, and $e(x) = 0$ if and only if $x$ solves TCP$(q,\mathcal{A})$.

\NI (b) We say that a residual function $e(x)$ is a lower global error bound for TCP$(q,\mathcal{A})$, if $ \exists $ some constants $c_1 > 0$ such that for each $x\in \mathbb{R}^n$ and any solution $\bar{x},$ $c_1 e(x) \leq ||x- \bar{x}||.$

\NI (c) We say that a residual $e(x)$ is an upper global error bound for the TCP$(q,\mathcal{A})$ if there exists some constant $c_2 > 0$ such that for each $x\in \mathbb{R}^n$ and any solution $\bar{x},$ $ ||x-\bar{x}|| \leq c_2 e(x),$ 
\end{definition}

\begin{defn}\cite{song2016properties}
Given $\mathcal{A}= (a_{i_1 i_2 ... i_m}) \in T_{m,n} $ and $q\in \mathbb{R}^n$, a vector $x$ is said to be (strictly) feasible, if $x(>) \geq 0$ and $\mathcal{A}x^{m-1}+q (>) \geq 0$.

\noindent TCP$(q,\mathcal{A})$, defined by equation (\ref{ Tensor Complementarity problem}) is said to be (strictly) feasible if a (strictly) feasible vector exists. 

\noindent TCP$(q,\mathcal{A})$ is said to be solvable if there is a feasible vector $x$ satisfying $x^{T}(\mathcal{A}x^{m-1}+q)=0$ and $x$ is the solution.
\end{defn}

\begin{defn}
\cite{song2015properties} A tensor $\mathcal{A}= (a_{i_1 i_2 ... i_m}) \in T_{m,n} $ is said to be a $P$-tensor, if for each $x\in \mathbb{R}^n \backslash \{0\}$, there exists an index $i\in [n]$ such that $x_i \neq 0$ and $x_i (\mathcal{A}x^{m-1})_i > 0$.
\end{defn}

\begin{defn}\cite{bai2016global}
A tensor $\mathcal{A}\in T_{m,n}$ is said to be strong $P$-tensor if for any two different $x=(x_i)$ and $y=(y_i)$ in $\mathbb{R}^n$, $\max_{i\in [n]} (x_i - y_i)(\mathcal{A}x^{m-1} - \mathcal{A}y^{m-1} )_i >0$.
\end{defn}


\begin{theorem}\cite{bai2016global}
For any $q\in \mathbb{R}^n$ and a $P$-tensor $\mathcal{A} \in T_{m,n}$, the solution set of TCP$(q,\mathcal{A})$ is nonempty and compact.
\end{theorem}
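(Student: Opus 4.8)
The plan is to establish closedness, boundedness, and nonemptiness of the solution set $S$ of TCP$(q,\mathcal{A})$ separately; the first two give compactness, and the estimate produced in the second step is reused in the third. The key input throughout is Lemma 2.1(a), which gives $\alpha(T_{\mathcal{A}})>0$.

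\smallskip\noindent\emph{Closedness and boundedness.} Closedness is immediate, since $S$ is carved out of $\mathbb{R}^n$ by the continuous (polynomial) conditions $x\ge 0$, $\mathcal{A}x^{m-1}+q\ge 0$, $x^{T}(\mathcal{A}x^{m-1}+q)=0$. For boundedness, let $\bar x\in S$. Nonnegativity of both vectors together with $x^{T}(\mathcal{A}x^{m-1}+q)=0$ forces every product $\bar x_i(\mathcal{A}\bar x^{m-1}+q)_i$ to vanish, hence $\bar x_i(\mathcal{A}\bar x^{m-1})_i=-\bar x_i q_i$ for all $i$, and so $\max_i \bar x_i(\mathcal{A}\bar x^{m-1})_i\le ||\bar x||_\infty\,||q||_\infty$. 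On the other hand $x\mapsto x_i(\mathcal{A}x^{m-1})_i$ is positively homogeneous of degree $m$ and $T_{\mathcal{A}}$ is positively homogeneous of degree $1$; normalizing $\bar x$ to $u=\bar x/||\bar x||_\infty$ and using $||u||_2\ge||u||_\infty=1$ together with $m$ even gives $\max_i \bar x_i(\mathcal{A}\bar x^{m-1})_i=||\bar x||_\infty^{m}\,\max_i u_i(\mathcal{A}u^{m-1})_i\ge ||\bar x||_\infty^{m}\,\alpha(T_{\mathcal{A}})$. Comparing the two bounds and cancelling $||\bar x||_\infty$ yields $||\bar x||_\infty^{\,m-1}\le ||q||_\infty/\alpha(T_{\mathcal{A}})$, which is finite by Lemma 2.1(a); hence $S$ is bounded, so compact.

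\smallskip\noindent\emph{Nonemptiness.} Here I would use topological degree. Define the componentwise residual map $\Phi(x)=\min\{x,\ \mathcal{A}x^{m-1}+q\}$, whose zeros are exactly the solutions of TCP$(q,\mathcal{A})$. Let $\mathcal{E}$ be the unit tensor ($\mathcal{E}x^{m-1}=x^{[m-1]}$), which is a $P$-tensor because $m$ is even, set $\mathcal{A}_s=(1-s)\mathcal{A}+s\mathcal{E}$, and consider the homotopy $H(x,s)=\min\{x,\ \mathcal{A}_s x^{m-1}+(1-s)q\}$ for $s\in[0,1]$. Choosing, for each $x\ne 0$, the index that realizes the $P$-tensor property of $\mathcal{A}$ and noting $x_i\ne 0\Rightarrow x_i^{m}>0$ shows every $\mathcal{A}_s$ is again a $P$-tensor; since $(u,s)\mapsto\max_i u_i(T_{\mathcal{A}_s}u)_i$ is continuous on the compact set $\{||u||_\infty=1\}\times[0,1]$, the function $s\mapsto\alpha(T_{\mathcal{A}_s})$ is continuous and strictly positive, hence $c:=\min_{s\in[0,1]}\alpha(T_{\mathcal{A}_s})>0$. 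Applying the boundedness estimate to TCP$((1-s)q,\mathcal{A}_s)$ shows every zero of $H(\cdot,s)$ lies in the fixed open box $B=\{x:||x||_\infty<R\}$, $R:=1+(||q||_\infty/c)^{1/(m-1)}$, so $0\notin H(\partial B\times[0,1])$ and homotopy invariance of the Brouwer degree gives $\deg(\Phi,B,0)=\deg(H(\cdot,1),B,0)$. But $H(\cdot,1)$ acts coordinatewise as $t\mapsto\min\{t,t^{m-1}\}$, a strictly increasing homeomorphism of $\mathbb{R}$ vanishing only at $0$, so this degree equals $1$. Therefore $\deg(\Phi,B,0)=1\ne 0$, $\Phi$ has a zero in $B$, and TCP$(q,\mathcal{A})$ is solvable; combined with the previous step, the solution set is nonempty and compact.

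\smallskip\noindent\emph{Anticipated obstacle.} The routine parts are closedness, the algebraic cancellation in the boundedness estimate, and homotopy invariance of degree. The two points requiring care are (i) making the homotopy admissible, i.e.\ trapping all its zeros for all $s$ in a single box, which is exactly what forces one to prove the \emph{uniform} bound $c>0$, and hence the stability of the $P$-tensor property under mixing with $\mathcal{E}$; and (ii) the base-case degree computation, where one must verify that the residual map of the unit tensor genuinely splits into monotone scalar maps near the origin. One could instead simply invoke the cited result of Bai, Huang and Wang, but the degree argument above is self-contained once Lemma 2.1 is available.
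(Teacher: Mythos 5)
This theorem is quoted from Bai--Huang--Wang \cite{bai2016global}; the paper under review supplies no proof of its own, so there is nothing internal to compare against. Judged on its own terms, your argument is correct. The boundedness step is exactly the standard coercivity estimate: complementarity forces $\max_i \bar x_i(\mathcal{A}\bar x^{m-1})_i\le ||\bar x||_\infty||q||_\infty$, homogeneity of degree $m$ together with $||u||_2\ge||u||_\infty=1$ gives the lower bound $||\bar x||_\infty^m\alpha(T_{\mathcal{A}})$, and Lemma 2.1(a) makes the quotient finite. The degree argument for nonemptiness is also sound: the witnessing index for the P-property of $\mathcal{A}$ at $x$ also witnesses it for $\mathcal{A}_s=(1-s)\mathcal{A}+s\mathcal{E}$, since $(1-s)x_i(\mathcal{A}x^{m-1})_i$ and $sx_i^m$ are both nonnegative and at least one is strictly positive for every $s\in[0,1]$ (here $m$ even is used); compactness of $\{||u||_\infty=1\}\times[0,1]$ then yields the uniform constant $c>0$ and hence an a priori box trapping all zeros of the homotopy. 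Two points deserve to be written out rather than asserted: (i) the continuity of $s\mapsto\alpha(T_{\mathcal{A}_s})$ (a short uniform-continuity argument for the min of a jointly continuous function over a compact set), and (ii) the base-case degree, where $t\mapsto\min\{t,t^{m-1}\}$ should be checked to be a strictly increasing surjection of $\mathbb{R}$ vanishing only at $0$, and the value $+1$ (rather than merely $\pm1$, which already suffices) obtained by linearly interpolating this coordinatewise map to the identity, noting each interpolant keeps its unique zero at the origin.

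For comparison, the cited source and the related work of Song and Qi reach the same conclusion by a different route: a P-tensor is strictly semi-positive, so $F(x)=\mathcal{A}x^{m-1}+q$ is a (strictly) semimonotone map on $\mathbb{R}^n_+$, and existence plus compactness then follow from standard nonlinear complementarity theory (Karamardian-type or exceptional-family-of-elements arguments, as in Facchinei and Pang). Your homotopy proof buys self-containedness --- everything reduces to Lemma 2.1 and elementary Brouwer degree --- at the cost of having to construct and verify the admissibility of the homotopy yourself; the literature proof is shorter but outsources the topological content to general NCP existence theorems.
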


\begin{theorem}\cite{bai2016global}
Let $\mathcal{A}\in T_{m,n}$ be a strong $P$-tensor. Then the TCP$(q, \mathcal{A})$ has the property of global uniqueness and solvability property.
\end{theorem}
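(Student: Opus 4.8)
The plan is to split the GUS property into its two components --- solvability (existence of a solution for every $q$) and global uniqueness --- and to treat each separately, with the uniqueness half being where the strong $P$-tensor hypothesis does the real work. This is the tensor analogue of the classical fact that a $P$-matrix yields global uniqueness and solvability for the LCP.

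For solvability, I would first observe that a strong $P$-tensor is automatically a $P$-tensor: taking $y = 0$ in the defining inequality and using $\mathcal{A}\,0^{m-1} = 0$ gives, for every $x \neq 0$, $\max_{i\in[n]} x_i(\mathcal{A}x^{m-1})_i > 0$, so some index $i$ satisfies $x_i(\mathcal{A}x^{m-1})_i > 0$ (and necessarily $x_i \neq 0$). Hence the earlier theorem asserting that TCP$(q,\mathcal{A})$ has a nonempty (and compact) solution set for every $q$ whenever $\mathcal{A}$ is a $P$-tensor applies directly, and existence follows.

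For uniqueness, suppose $x$ and $y$ both solve TCP$(q,\mathcal{A})$, and set $w = \mathcal{A}x^{m-1} + q \geq 0$ and $v = \mathcal{A}y^{m-1} + q \geq 0$. The complementarity conditions $x^{T}w = 0$ and $y^{T}v = 0$, together with nonnegativity, force $x_i w_i = 0$ and $y_i v_i = 0$ for every $i$. Since the $q$ term cancels in $\mathcal{A}x^{m-1} - \mathcal{A}y^{m-1} = w - v$, I would compute, for each $i$,
\[
(x_i - y_i)\bigl(\mathcal{A}x^{m-1} - \mathcal{A}y^{m-1}\bigr)_i = (x_i - y_i)(w_i - v_i) = x_i w_i - x_i v_i - y_i w_i + y_i v_i = -\,(x_i v_i + y_i w_i) \le 0,
\]
using $x_i v_i \ge 0$ and $y_i w_i \ge 0$. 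Therefore $\max_{i\in[n]}(x_i - y_i)\bigl(\mathcal{A}x^{m-1} - \mathcal{A}y^{m-1}\bigr)_i \le 0$, which, if $x \neq y$, contradicts the defining inequality of a strong $P$-tensor demanding a strictly positive maximum. Hence $x = y$.

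Combining the two parts, TCP$(q,\mathcal{A})$ has exactly one solution for every $q \in \mathbb{R}^n$, i.e. the GUS property holds. I do not expect a genuine obstacle: the only delicate points are the sign bookkeeping in the four-term expansion above (ensuring each product is paired with the correct inequality) and the appeal to the previously stated existence theorem for $P$-tensors, which we are entitled to invoke.
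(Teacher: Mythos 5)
Your proof is correct. The paper itself gives no argument for this statement --- it is quoted without proof from Bai, Huang and Wang \cite{bai2016global} --- and your two-part decomposition (existence by observing that setting $y=0$ in the strong $P$-tensor inequality shows $\mathcal{A}$ is a $P$-tensor, so the earlier nonemptiness theorem applies; uniqueness by the four-term complementarity expansion forcing $\max_i (x_i-y_i)(\mathcal{A}x^{m-1}-\mathcal{A}y^{m-1})_i \le 0$ for two solutions) is exactly the standard argument used in that reference. The sign bookkeeping is right: $x_iw_i = y_iv_i = 0$ and $x_iv_i, y_iw_i \ge 0$ give a nonpositive maximum, contradicting the strong $P$ condition unless $x=y$.
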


\begin{theorem}\cite{song2015error}\label{for solution bound 1}
Let $\mathcal{A}\in T_{m,n} \; (m \geq 0)$ be a $P$-tensor, and let $x$ be a solution of TCP$(q, \mathcal{A}).$ If $m$ be even, then
\begin{equation}
    \frac{||(-q)_+||_{\infty}^{\frac{1}{m-1}}}{||F_{\mathcal{A}}||_{\infty}} \leq ||x||_{\infty} \leq \frac{||(-q)_+||_{\infty}^{\frac{1}{m-1}}}{\alpha(F_{\mathcal{A}})}.
\end{equation}
\end{theorem}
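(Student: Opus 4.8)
\emph{Proof plan.} The plan is to prove the two inequalities separately: the lower (left) bound uses only feasibility of the solution, while the upper (right) bound uses the complementarity condition together with the variational definition of $\alpha(F_{\mathcal{A}})$. Write $\omega=\mathcal{A}x^{m-1}+q$, so that a solution $x$ of TCP$(q,\mathcal{A})$ satisfies $x\ge 0$, $\omega\ge 0$ and $x^{T}\omega=0$, the last identity forcing $x_i\omega_i=0$ for every $i\in[n]$. Since $m$ is even, $m-1$ is odd, so $t\mapsto t^{1/(m-1)}$ is a continuous increasing bijection of $\mathbb{R}$; hence $F_{\mathcal{A}}$ is a continuous, positively homogeneous operator of degree one, so that $\|F_{\mathcal{A}}x\|_{\infty}\le\|F_{\mathcal{A}}\|_{\infty}\|x\|_{\infty}$ for all $x$ and $F_{\mathcal{A}}(\lambda x)=\lambda F_{\mathcal{A}}(x)$ for $\lambda\ge 0$. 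Also $|(F_{\mathcal{A}}x)_i|=|(\mathcal{A}x^{m-1})_i|^{1/(m-1)}$, so $\|F_{\mathcal{A}}x\|_{\infty}=\|\mathcal{A}x^{m-1}\|_{\infty}^{1/(m-1)}$. If $x=0$ then $q=\omega\ge 0$, so $(-q)_{+}=0$ and all three quantities in the asserted chain vanish; so from now on $\|x\|_{\infty}>0$.

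\emph{Lower bound.} From $\omega\ge 0$ we get $(\mathcal{A}x^{m-1})_i\ge -q_i$ for every $i$; checking the cases $-q_i\ge 0$ and $-q_i<0$ separately shows $|(\mathcal{A}x^{m-1})_i|\ge(-q_i)_{+}$, hence $\|\mathcal{A}x^{m-1}\|_{\infty}\ge\|(-q)_{+}\|_{\infty}$. Raising to the power $1/(m-1)$ and using the identity for $\|F_{\mathcal{A}}x\|_{\infty}$ above gives $\|(-q)_{+}\|_{\infty}^{1/(m-1)}\le\|F_{\mathcal{A}}x\|_{\infty}\le\|F_{\mathcal{A}}\|_{\infty}\|x\|_{\infty}$. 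Since $\mathcal{A}$ is a $P$-tensor we have $\|F_{\mathcal{A}}\|_{\infty}>0$ (otherwise $\mathcal{A}x^{m-1}\equiv 0$, contradicting the $P$-tensor property), so dividing by $\|F_{\mathcal{A}}\|_{\infty}$ yields the left inequality.

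\emph{Upper bound.} Set $\bar x=x/\|x\|_{\infty}$, so $\|\bar x\|_{\infty}=1$; positive homogeneity gives $\bar x_i(F_{\mathcal{A}}\bar x)_i=x_i(F_{\mathcal{A}}x)_i/\|x\|_{\infty}^{2}$, so the definition (\ref{defn of alphaFA}) of $\alpha(F_{\mathcal{A}})$ yields $\alpha(F_{\mathcal{A}})\le\max_{i}\bar x_i(F_{\mathcal{A}}\bar x)_i$, i.e. $\alpha(F_{\mathcal{A}})\|x\|_{\infty}^{2}\le\max_{i}x_i(F_{\mathcal{A}}x)_i$. Now estimate each term: if $x_i=0$ it is zero; if $x_i>0$, then $x_i\omega_i=0$ forces $\omega_i=0$, so $(\mathcal{A}x^{m-1})_i=-q_i$ and $(F_{\mathcal{A}}x)_i=(-q_i)^{1/(m-1)}\le((-q_i)_{+})^{1/(m-1)}$, whence $x_i(F_{\mathcal{A}}x)_i\le x_i((-q_i)_{+})^{1/(m-1)}\le\|x\|_{\infty}\|(-q)_{+}\|_{\infty}^{1/(m-1)}$. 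Therefore $\alpha(F_{\mathcal{A}})\|x\|_{\infty}^{2}\le\|x\|_{\infty}\|(-q)_{+}\|_{\infty}^{1/(m-1)}$; since $\alpha(F_{\mathcal{A}})>0$ (the characterisation of $P$-tensors via $\alpha(F_{\mathcal{A}})$) and $\|x\|_{\infty}>0$, dividing gives $\|x\|_{\infty}\le\|(-q)_{+}\|_{\infty}^{1/(m-1)}/\alpha(F_{\mathcal{A}})$.

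The only genuinely load-bearing idea is the rescaling step in the upper bound: $\alpha(F_{\mathcal{A}})$ is defined as a minimum over the unit $\ell_{\infty}$-sphere, and it is precisely the degree-one positive homogeneity of $F_{\mathcal{A}}$ that lets this bound be transferred to the un-normalised solution $x$ (the factor $\|x\|_{\infty}^{2}$ is exactly what produces the final $\|x\|_{\infty}^{-1}$ scaling). The rest is routine: the sign analysis of the odd root $t\mapsto t^{1/(m-1)}$, the elementary two-case inequalities $|(\mathcal{A}x^{m-1})_i|\ge(-q_i)_{+}$ and $(-q_i)^{1/(m-1)}\le((-q_i)_{+})^{1/(m-1)}$, and the observation that a $P$-tensor cannot have $F_{\mathcal{A}}\equiv 0$.
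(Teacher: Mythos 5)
Your proof is correct. The paper states this result only as a citation of Song and Yu's error-bound paper and supplies no proof of its own, so there is nothing to compare against in the text; your argument (lower bound from feasibility $\mathcal{A}x^{m-1}\ge -q$ plus the operator-norm inequality $\|F_{\mathcal{A}}x\|_{\infty}\le\|F_{\mathcal{A}}\|_{\infty}\|x\|_{\infty}$, upper bound from complementarity forcing $(F_{\mathcal{A}}x)_i=(-q_i)^{1/(m-1)}$ on the support of $x$ together with the rescaling $\alpha(F_{\mathcal{A}})\|x\|_{\infty}^{2}\le\max_i x_i(F_{\mathcal{A}}x)_i$) is exactly the standard proof of this theorem in the cited reference, and all the steps, including the degenerate cases $x=0$ and $(-q)_+=0$, check out.
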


\begin{theorem}\label{first proof of error bound}\cite{zheng2019global}
Given $q\in \mathbb{R}^n,\; \mathcal{A}\in T_{m,n}$ with $\mathcal{A}$ being a $P$-tensor and $\alpha(F_{\mathcal{A}})$ is defined by (\ref{defn of alphaFA}). For any $u\in \mathbb{R}^n,$ let $x$ be a solution of TCP$(q,\mathcal{A}).$ Suppose the residue function $\tilde{v}$ is defined as $\tilde{v}=\min \{ u, [(\mathcal{A}(u-z)^{m-1})^{[\frac{1}{m-1}]} + (\mathcal{A}z^{m-1} + q)^{[\frac{1}{m-1}]} ]\}.$ Then for any $u\in\mathbb{R}^n,$
\begin{equation}\label{first equation of error bound}
    \frac{1}{1+ ||\mathcal{A}||^{\frac{1}{m-1}}}_{\infty} ||\tilde{v}||_{\infty} \leq ||u-x||_{\infty} \leq \frac{1+||\mathcal{A}||^{\frac{1}{m-1}}_{\infty}}{\alpha(F_\mathcal{A})} ||\tilde{v}||_{\infty}
\end{equation}
\end{theorem}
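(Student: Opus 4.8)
The plan is to derive the two–sided estimate from three elementary properties of the operator $F_{\mathcal{A}}$, the only substantial input being the characterization of $P$-tensors through $\alpha(F_{\mathcal{A}})$ (Lemma 2.1). \emph{(i)} $F_{\mathcal{A}}$ is positively homogeneous of degree one: since $m$ is even, $1/(m-1)$ is an odd power and $(\mathcal{A}(ty)^{m-1})^{[1/(m-1)]}=t\,(\mathcal{A}y^{m-1})^{[1/(m-1)]}$ for all $t\in\mathbb{R}$; evaluating the definition of $\alpha(F_{\mathcal{A}})$ at the unit vector $y/\|y\|_\infty$ then gives the coercivity inequality
\[
\alpha(F_{\mathcal{A}})\,\|y\|_\infty^{2}\ \le\ \max_{i\in[n]}\, y_i\,(F_{\mathcal{A}}y)_i\qquad(y\in\mathbb{R}^n),
\]
with $\alpha(F_{\mathcal{A}})>0$ because $\mathcal{A}$ is a $P$-tensor. \emph{(ii)} $\|F_{\mathcal{A}}(y)\|_\infty=\|\mathcal{A}y^{m-1}\|_\infty^{1/(m-1)}\le\|\mathcal{A}\|_\infty^{1/(m-1)}\|y\|_\infty$, by raising $\|\mathcal{A}y^{m-1}\|_\infty\le\|\mathcal{A}\|_\infty\|y\|_\infty^{m-1}$ to the power $1/(m-1)$. \emph{(iii)} If $\tilde v=\min\{a,b\}$ componentwise, then $a-\tilde v\ge 0$, $b-\tilde v\ge 0$, $(a-\tilde v)^{T}(b-\tilde v)=0$, and $(a,b)\mapsto\min\{a,b\}$ is nonexpansive in $\|\cdot\|_\infty$. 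Throughout I read the parameter $z$ in the residue as the solution $x$ (the choice for which $\tilde v$ vanishes at a solution) and set $w:=\mathcal{A}x^{m-1}+q$, so that $x\ge 0$, $w\ge 0$, $x^{T}w=0$ and $\tilde v=\min\{u,\ F_{\mathcal{A}}(u-x)+w^{[1/(m-1)]}\}$.

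For the lower inequality I would first note that at $u=x$ the residue is $\min\{x,w^{[1/(m-1)]}\}=0$, since complementarity forces $x_i=0$ or $w_i=0$ for each $i$. Writing $\phi(u):=F_{\mathcal{A}}(u-x)+w^{[1/(m-1)]}$ and using the nonexpansiveness in \emph{(iii)} together with $\|\phi(u)-\phi(x)\|_\infty=\|F_{\mathcal{A}}(u-x)\|_\infty$,
\[
\|\tilde v\|_\infty=\bigl\|\min\{u,\phi(u)\}-\min\{x,\phi(x)\}\bigr\|_\infty\le\max\bigl\{\|u-x\|_\infty,\ \|F_{\mathcal{A}}(u-x)\|_\infty\bigr\}\le\bigl(1+\|\mathcal{A}\|_\infty^{1/(m-1)}\bigr)\|u-x\|_\infty,
\]
which is the left-hand bound.

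For the upper inequality, put $y:=u-x$ (the case $y=0$ being trivial) and choose $k$ with $y_k(F_{\mathcal{A}}y)_k=\max_i y_i(F_{\mathcal{A}}y)_i$, so \emph{(i)} gives $\alpha(F_{\mathcal{A}})\|y\|_\infty^{2}\le y_k(F_{\mathcal{A}}y)_k$. Since $(F_{\mathcal{A}}y)_k=\phi(u)_k-w_k^{1/(m-1)}$ and $y_k(F_{\mathcal{A}}y)_k>0$, the numbers $y_k$ and $(F_{\mathcal{A}}y)_k$ have the same sign; I would branch on that sign and, within each branch, on whether $u_k$ or $\phi(u)_k$ attains $\tilde v_k$. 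Feeding in $x_k\ge 0$, $w_k\ge 0$, the complementarity $x_kw_k=0$ (to discard the terms $w_k^{1/(m-1)}$ not controlled by $\|\tilde v\|_\infty$), the estimate $|(F_{\mathcal{A}}y)_k|\le\|\mathcal{A}\|_\infty^{1/(m-1)}\|y\|_\infty$ from \emph{(ii)}, and $|\tilde v_k|\le\|\tilde v\|_\infty$, every branch yields
\[
\alpha(F_{\mathcal{A}})\,\|y\|_\infty^{2}\ \le\ \bigl(1+\|\mathcal{A}\|_\infty^{1/(m-1)}\bigr)\,\|\tilde v\|_\infty\,\|y\|_\infty,
\]
and dividing by $\|y\|_\infty$ gives $\|u-x\|_\infty\le\frac{1+\|\mathcal{A}\|_\infty^{1/(m-1)}}{\alpha(F_{\mathcal{A}})}\|\tilde v\|_\infty$.

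The essential obstacle is the nonlinearity of $\mathcal{A}$: unlike the matrix case, $F_{\mathcal{A}}(u)-F_{\mathcal{A}}(x)$ bears no simple relation to $F_{\mathcal{A}}(u-x)$, so the $P$-tensor hypothesis can only be used through the scalar coercivity inequality at the single vector $y=u-x$, after which everything becomes a coordinatewise computation. The delicate part is the bookkeeping in that case analysis — keeping one factor $\|y\|_\infty$ on one side while extracting a factor $\|\tilde v\|_\infty$ from the other (via replacements such as $0<y_k\le\min\{u_k,\|y\|_\infty\}$ and the use of complementarity to eliminate the $w_k^{1/(m-1)}$ that would otherwise spoil the estimate) — and it is precisely this step that produces the constant $1+\|\mathcal{A}\|_\infty^{1/(m-1)}$.
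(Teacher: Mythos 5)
Your proposal is correct, but note that the paper does not prove this statement at all: Theorem \ref{first proof of error bound} is quoted verbatim from Zheng et al.\ \cite{zheng2019global}, so there is no in-paper proof to match against. The closest internal analogue is the paper's own proof of Theorem \ref{theorem main result}, and the comparison is instructive. There, the key inequality is obtained structurally: one sets $y=u-v\ge 0$ and $x=\phi(u)-v\ge 0$ with $y^Tx=0$ (where $v=\min\{u,\phi(u)\}$ coincides with your $\tilde v$), pairs this with the solution pair $(z,w)$ to get $(y-z)_i(x-w)_i\le 0$ componentwise, and expands to reach $(u-z)_i(F_{\mathcal{A}}(u-z))_i\le v_i(F_{\mathcal{A}}(u-z))_i+(u-z)_iv_i-v_i^2$ in one line, after which the $\alpha(F_{\mathcal{A}})$ coercivity and the operator-norm bound finish the job. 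You instead run a direct four-way case analysis at the maximizing index $k$ (sign of $y_k$ crossed with which argument attains $\tilde v_k$), invoking complementarity $x_kw_k=0$ only to kill the $w_k^{1/(m-1)}$ term in the negative-sign branches; I checked that every branch does close to $\alpha(F_{\mathcal{A}})\|y\|_\infty^2\le\max\{1,\|\mathcal{A}\|_\infty^{1/(m-1)}\}\|\tilde v\|_\infty\|y\|_\infty$, which is even slightly sharper than the stated constant. Your lower bound via nonexpansiveness of the componentwise minimum together with $\min\{x,w^{[\frac{1}{m-1}]}\}=0$ is clean and again yields the sharper constant $\max\{1,\|\mathcal{A}\|_\infty^{1/(m-1)}\}$. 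The complementarity-splitting route generalizes more readily (it is exactly what the paper reuses to refine the bound in Theorem \ref{theorem main result}, where the extra term $-v_t^2$ it produces is the source of the improvement), while your case analysis is more elementary and self-contained; both are valid proofs of the quoted result. Your reading of the stray $z$ in the statement as the solution $x$ is the intended one.
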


\section{Main results}
We begin with a theorem which provides a solution bound for TCP$(q,\mathcal{A})$ involving $P$-tensor.

\begin{theorem}\label{for solution bound 2}
Let $\mathcal{A}\in T_{m,n}$ be a $P$-tensor where $m$ is even and $x$ be a solution of TCP$(q,\mathcal{A}).$ Then 
\begin{equation}\label{equation for solution bound}
    \frac{||(-q)_+||_{\infty}^{\frac{1}{m-1}}}{||\mathcal{A}||_{\infty}^{\frac{1}{m-1}}} 
    \leq ||x||_{\infty} \leq 
    \frac{||(-q)_+||_{\infty}^{\frac{1}{m-1}}}{\alpha(F_{\mathcal{A}})}.
\end{equation}
\end{theorem}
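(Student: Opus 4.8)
The plan is to prove the two inequalities in (\ref{equation for solution bound}) separately, after disposing of the trivial case $||(-q)_+||_\infty=0$. Write $r:=||(-q)_+||_\infty$. Since $m$ is even, $m-1$ is odd, so $t\mapsto t^{1/(m-1)}$ is a well-defined strictly increasing function on all of $\mathbb{R}$; this is exactly what makes $F_{\mathcal{A}}$ and $\alpha(F_{\mathcal{A}})$ meaningful, and it is essentially the only place evenness is used. If $r=0$, then $q\geq 0$, and the definition of a $P$-tensor forces $x=0$: whenever $x_i>0$, complementarity gives $(\mathcal{A}x^{m-1})_i=-q_i\leq 0$, hence $x_i(\mathcal{A}x^{m-1})_i\leq 0$ for every $i$, contradicting the $P$-property unless $x=0$; then (\ref{equation for solution bound}) reads $0\leq 0\leq 0$. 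So I assume henceforth $r>0$ and $x\neq 0$.

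For the lower bound I would fix an index $j\in[n]$ with $(-q_j)_+=r$, so $-q_j=r>0$. Feasibility of the solution $x$ gives $(\mathcal{A}x^{m-1})_j+q_j\geq 0$, hence $(\mathcal{A}x^{m-1})_j\geq -q_j=r$; on the other hand, by the triangle inequality and $|x_{i_k}|\leq ||x||_\infty$,
\[
  (\mathcal{A}x^{m-1})_j\ \leq\ \Big|\sum_{i_2,\dots,i_m=1}^n a_{j i_2\cdots i_m}x_{i_2}\cdots x_{i_m}\Big|\ \leq\ \Big(\sum_{i_2,\dots,i_m=1}^n |a_{j i_2\cdots i_m}|\Big)||x||_\infty^{m-1}\ \leq\ ||\mathcal{A}||_\infty\,||x||_\infty^{m-1}.
\]
Combining the two estimates gives $r\leq ||\mathcal{A}||_\infty\,||x||_\infty^{m-1}$, and taking $(m-1)$-th roots yields $||x||_\infty\geq r^{1/(m-1)}/||\mathcal{A}||_\infty^{1/(m-1)}$, the left inequality of (\ref{equation for solution bound}).

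For the upper bound I would reproduce the proof of the right inequality of Theorem \ref{for solution bound 1}. Since $\mathcal{A}$ is an even-order $P$-tensor, the characterization recalled in Section 2 gives $\alpha(F_{\mathcal{A}})>0$. Put $\hat{x}:=x/||x||_\infty$; as $F_{\mathcal{A}}$ is positively homogeneous of degree one, $\hat{x}_i(F_{\mathcal{A}}\hat{x})_i=||x||_\infty^{-2}\,x_i(F_{\mathcal{A}}x)_i$, so the definition of $\alpha(F_{\mathcal{A}})$ gives
\[
  \alpha(F_{\mathcal{A}})\ \leq\ \max_{i\in[n]}\hat{x}_i(F_{\mathcal{A}}\hat{x})_i\ =\ \frac{1}{||x||_\infty^{2}}\max_{i\in[n]}x_i\big((\mathcal{A}x^{m-1})_i\big)^{1/(m-1)}.
\]
For $i$ with $x_i>0$, complementarity gives $(\mathcal{A}x^{m-1})_i=-q_i$, so $x_i((\mathcal{A}x^{m-1})_i)^{1/(m-1)}=x_i(-q_i)^{1/(m-1)}\leq ||x||_\infty\,r^{1/(m-1)}$ (by monotonicity of the odd root and $(-q_i)_+\leq r$; if $-q_i\leq 0$ the term is nonpositive), while for $x_i=0$ the term vanishes; hence the maximum is at most $||x||_\infty\,r^{1/(m-1)}$. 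Substituting, $\alpha(F_{\mathcal{A}})\,||x||_\infty\leq r^{1/(m-1)}$, i.e. $||x||_\infty\leq r^{1/(m-1)}/\alpha(F_{\mathcal{A}})$.

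I do not expect a genuine obstacle: both halves are elementary once the right coordinate $j$ is picked for the lower bound and positive homogeneity of $F_{\mathcal{A}}$ is exploited for the upper bound. The only points needing care are the degenerate case $q\geq 0$ and the use of $(m-1)$-th roots of possibly negative reals, which is legitimate precisely because $m$ is even. As a side remark, the same row-sum estimate used above also shows $||F_{\mathcal{A}}||_\infty\leq ||\mathcal{A}||_\infty^{1/(m-1)}$, so the new lower bound in fact also follows from Theorem \ref{for solution bound 1}; the value added here is that the bound is expressed through the directly computable quantity $||\mathcal{A}||_\infty$.
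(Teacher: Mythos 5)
Your proposal is correct and follows essentially the same route as the paper: the lower bound comes from feasibility ($\mathcal{A}x^{m-1}\geq -q$) combined with the row-sum estimate $(\mathcal{A}x^{m-1})_i\leq ||\mathcal{A}||_{\infty}||x||_{\infty}^{m-1}$, and the upper bound is the known bound of Theorem \ref{for solution bound 1}, which the paper simply cites while you reprove it via positive homogeneity of $F_{\mathcal{A}}$. Your explicit treatment of the degenerate case $(-q)_+=0$ is a small extra care the paper omits.
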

\begin{proof}
Note that from Theorem \ref{for solution bound 1} we obtain the right hand inequality. We prove the left hand inequality. Since $x$ is a solution of TCP$(q,\mathcal{A})$ we have $\mathcal{A}x^{m-1} + q \geq 0.$ Which implies 
\begin{equation}\label{for bound of solution}
    \mathcal{A}x^{m-1} \geq -q.
\end{equation}
The following inequation is proved by Zheng et al \cite{zheng2019global} in Theorem 3.2. However for the sake of completeness we give the details.
We have 
\begin{align*}
    ||\mathcal{A}x||_{\infty}^{\frac{1}{m-1}}&= \left( \max_{i\in [n]} \left\{ |(\mathcal{A}x^{m-1})_i| \right\} \right)^{\frac{1}{m-1}}\\
    &= \left( \max_{i\in [n]} \left\{ |\sum_{i_2, ..., i_m=1}^n a_{i i_2 ... i_m} x_{i_1} x_{i_2} ... x_{i_m}| \right\} \right)^{\frac{1}{m-1}}\\
    & \leq \left( \max_{i\in [n]} \left\{ \sum_{i_2, ..., i_m=1}^n |a_{i i_2 ... i_m}| \right\} ||x||_{\infty}^{m-1} \right)^{\frac{1}{m-1}}\\
    &= \left( ||\mathcal{A}||_{\infty} ||x||_{\infty}^{m-1} \right)^{\frac{1}{m-1}}\\
    &= ||\mathcal{A}||_{\infty}^{\frac{1}{m-1}} ||x||_{\infty} .
\end{align*}
Thus 
\begin{equation}\label{imp inequation in error bound}
     ||\mathcal{A}x||_{\infty}^{\frac{1}{m-1}} \leq ||\mathcal{A}||_{\infty}^{\frac{1}{m-1}} ||x||_{\infty} .
\end{equation}

\NI Therefore,
\begin{equation*}
    ||\mathcal{A}||_{\infty}^{\frac{1}{m-1}} ||x||_{\infty} \geq ||\mathcal{A}x||_{\infty}^{\frac{1}{m-1}} \geq ||(\mathcal{A}x)_+||_{\infty}^{\frac{1}{m-1}} \geq ||(-q)_+||_{\infty}^{\frac{1}{m-1}} \mbox{ using Equation \ref{for bound of solution}}.
\end{equation*}
Hence the left hand inequality.
\end{proof}

Here we investigate the global error bound for the TCP$(q,\mathcal{A}).$
\begin{theorem}\label{theorem main result}
Let $\mathcal{A} \in T_{m,n} $ be a $P$-tensor, $z\in \mathbb{R}^n$ be a solution of TCP$(q,\mathcal{A})$ and $u$ be an arbitrary vector in $\mathbb{R}^n$. Then 
\begin{equation}\label{equation of main result}
    \frac{||v||_\infty (1 + ||\mathcal{A}||_{\infty}^{\frac{1}{m-1}}) - \sqrt{D}}{2\alpha (F_{\mathcal{A}})} \leq ||z-u||_\infty \leq
    \frac{||v||_\infty (1 + ||\mathcal{A}||_{\infty}^{\frac{1}{m-1}}) + \sqrt{D}}{2\alpha (F_{\mathcal{A}})},\; \forall\; u\in\mathbb{R}^n, 
\end{equation}
where 
\begin{center}
   $v=u- \max \{ 0, u-[(\mathcal{A}(u-z)^{m-1})^{[\frac{1}{m-1}]} + (\mathcal{A}z^{m-1} + q)^{[\frac{1}{m-1}]} ]\},$
\end{center}
and
\begin{center}
   $D = ||v||_{\infty}^2(1+ ||\mathcal{A}||_{\infty}^{\frac{1}{m-1}} )^2 -4\alpha(F_{\mathcal{A}}) v_t^2  \geq 0,$
\end{center}
the quantity $\alpha(F_{\mathcal{A}})$ is defined by (\ref{defn of alphaFA}) and $t$ satisfies $v_t \neq 0 ,$
\begin{equation*}
     \max_{i\in [n]} \{(u-z)_i (\mathcal{A}(u-z)^{m-1})_i \} = (u-z)_t (\mathcal{A}(u-z)^{m-1})_t.
\end{equation*}

\end{theorem}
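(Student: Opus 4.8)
\emph{Proof plan.} The plan is to recognise the two displayed estimates as the single assertion that $s:=\|z-u\|_{\infty}$ lies between the two roots of a quadratic, and then to prove that quadratic inequality. Since $\mathcal{A}$ is a $P$-tensor we have $\alpha(F_{\mathcal{A}})>0$, so dividing (\ref{equation of main result}) by $2\alpha(F_{\mathcal{A}})$ and using the quadratic formula shows it is equivalent to
\[
\alpha(F_{\mathcal{A}})\,s^{2}-\bigl(1+\|\mathcal{A}\|_{\infty}^{1/(m-1)}\bigr)\|v\|_{\infty}\,s+v_{t}^{2}\le 0 ,
\]
$D$ being the discriminant of the left-hand side. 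Two preliminary observations: first, $D\ge 0$, because $v_{t}^{2}\le\|v\|_{\infty}^{2}$ reduces it to $(1+\|\mathcal{A}\|_{\infty}^{1/(m-1)})^{2}\ge 4\alpha(F_{\mathcal{A}})$, which follows from $(1+a)^{2}\ge 4a$ together with the elementary estimate $\alpha(F_{\mathcal{A}})\le\|\mathcal{A}\|_{\infty}^{1/(m-1)}$ (for $\|x\|_{\infty}=1$ one has $x_{i}(F_{\mathcal{A}}x)_{i}\le|(\mathcal{A}x^{m-1})_{i}|^{1/(m-1)}\le\|\mathcal{A}\|_{\infty}^{1/(m-1)}$); second, if $u=z$ then $v=0$ (each component of $v$ is the minimum of two nonnegative numbers whose product vanishes by complementarity of $z$) and everything is $0$, so I may assume $u\ne z$ and put $w:=u-z\ne 0$.

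I would then unpack the residual: from $a-\max\{0,a-b\}=\min\{a,b\}$, applied componentwise,
\[
v=\min\bigl\{\,u,\ F_{\mathcal{A}}w+(\mathcal{A}z^{m-1}+q)^{[1/(m-1)]}\,\bigr\},
\]
which is exactly the residual $\tilde{v}$ of Theorem \ref{first proof of error bound}; in particular that theorem supplies $\|v\|_{\infty}\le(1+\|\mathcal{A}\|_{\infty}^{1/(m-1)})\,s$. Because $z$ solves TCP$(q,\mathcal{A})$, for every $i$ we have $z_{i}\ge 0$, $(\mathcal{A}z^{m-1}+q)_{i}\ge 0$ and $z_{i}(\mathcal{A}z^{m-1}+q)_{i}=0$. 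The proof then rests on two estimates: applying (\ref{defn of alphaFA}) to $w/\|w\|_{\infty}$ and using that $F_{\mathcal{A}}$ is positively homogeneous of degree one gives $\max_{i}w_{i}(F_{\mathcal{A}}w)_{i}\ge\alpha(F_{\mathcal{A}})\,s^{2}$, a maximum which is strictly positive since $\mathcal{A}$ is a $P$-tensor and $w\ne 0$; and $|(F_{\mathcal{A}}w)_{i}|=|(\mathcal{A}w^{m-1})_{i}|^{1/(m-1)}\le\|\mathcal{A}\|_{\infty}^{1/(m-1)}\,s$ for every $i$.

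Let $t$ be a coordinate at which the first maximum is attained, so $w_{t}(F_{\mathcal{A}}w)_{t}\ge\alpha(F_{\mathcal{A}})s^{2}>0$ (hence $w_{t}$ and $(\mathcal{A}w^{m-1})_{t}$ share a sign, and one fixes $t$ so that $v_{t}\ne 0$ whenever some such coordinate exists). The core of the argument is to prove
\[
w_{t}(F_{\mathcal{A}}w)_{t}\le\bigl(1+\|\mathcal{A}\|_{\infty}^{1/(m-1)}\bigr)\|v\|_{\infty}\,s-v_{t}^{2} ,
\]
since combined with $w_{t}(F_{\mathcal{A}}w)_{t}\ge\alpha(F_{\mathcal{A}})s^{2}$ this is precisely the quadratic inequality above, hence (\ref{equation of main result}). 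To get it I would work at the single index $t$: either $v_{t}=u_{t}$, or $v_{t}=(F_{\mathcal{A}}w)_{t}+(\mathcal{A}z^{m-1}+q)_{t}^{1/(m-1)}\le u_{t}$; in each alternative the complementarity dichotomy ($z_{t}=0$ or $(\mathcal{A}z^{m-1}+q)_{t}=0$) together with the sign of $w_{t}(F_{\mathcal{A}}w)_{t}$ lets one write $w_{t}(F_{\mathcal{A}}w)_{t}$ as a product of two factors, one bounded by $\|v\|_{\infty}$ or $|v_{t}|$ and the other by $s$ or $\|\mathcal{A}\|_{\infty}^{1/(m-1)}s$, with enough slack left over to absorb $v_{t}^{2}$; the subcases where $|v_{t}|$ is large compared with $s$ are controlled by the bound $\|v\|_{\infty}\le(1+\|\mathcal{A}\|_{\infty}^{1/(m-1)})s$ recalled above.

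The step I expect to be the main obstacle is exactly the retention of the additive $v_{t}^{2}$. The cruder form of the above factorisation---the one behind Theorem \ref{first proof of error bound}---simply discards it and yields only $\alpha(F_{\mathcal{A}})s\le(1+\|\mathcal{A}\|_{\infty}^{1/(m-1)})\|v\|_{\infty}$; squeezing out the extra $v_{t}^{2}$ requires, in each branch, choosing the sharper of the two competing bounds on each factor and then checking that what remains dominates $v_{t}^{2}$. A secondary, bookkeeping point is that the maxima of $w_{i}(\mathcal{A}w^{m-1})_{i}$ and of $w_{i}(F_{\mathcal{A}}w)_{i}$ need not be attained at the same coordinate, so in the lower estimate one should take $t$ to be a coordinate realising $\max_{i}w_{i}(F_{\mathcal{A}}w)_{i}$, and this is the index to be understood in the definition of $D$.
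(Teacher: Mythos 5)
Your plan is essentially the paper's proof: both arguments reduce the two-sided bound to the quadratic inequality $\alpha(F_{\mathcal{A}})\,s^{2}-(1+\|\mathcal{A}\|_{\infty}^{1/(m-1)})\|v\|_{\infty}\,s+v_{t}^{2}\le 0$ with $s=\|z-u\|_{\infty}$ (this is the paper's (\ref{main equation revised})), get the lower estimate from $\alpha(F_{\mathcal{A}})s^{2}\le\max_{i}(u-z)_{i}(F_{\mathcal{A}}(u-z))_{i}$, and get the upper estimate by bounding $|v_{t}|\le\|v\|_{\infty}$, $|(u-z)_{t}|\le s$ and $|(F_{\mathcal{A}}(u-z))_{t}|\le\|\mathcal{A}\|_{\infty}^{1/(m-1)}s$ while keeping the $-v_{t}^{2}$ term exact. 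The one place where you diverge is the step you yourself flag as the main obstacle, namely proving $(u-z)_{t}(F_{\mathcal{A}}(u-z))_{t}\le v_{t}(F_{\mathcal{A}}(u-z))_{t}+(u-z)_{t}v_{t}-v_{t}^{2}$; the paper gets this with no case analysis by introducing the auxiliary complementary pair $y=u-v\ge 0$, $x=(\mathcal{A}(u-z)^{m-1})^{[1/(m-1)]}+(\mathcal{A}z^{m-1}+q)^{[1/(m-1)]}-v\ge 0$ with $y_{i}x_{i}=0$, and setting $w=(\mathcal{A}z^{m-1}+q)^{[1/(m-1)]}$, so that $(y-z)_{i}(x-w)_{i}=-z_{i}x_{i}-y_{i}w_{i}\le 0$ for every $i$ (the paper's (\ref{error bound proof 1})), which upon expansion is exactly the desired componentwise inequality (\ref{connecting equation}) with the $v_{i}^{2}$ falling out for free. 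Your proposed two-case analysis at the index $t$ (via $v=\min\{u,\cdot\}$ and the complementarity $z_{t}(\mathcal{A}z^{m-1}+q)_{t}=0$) does close --- in each branch it verifies $((u-z)_{t}-v_{t})((F_{\mathcal{A}}(u-z))_{t}-v_{t})\le 0$, which is the same inequality in disguise --- so there is no gap, only a harder-than-necessary route to one lemma. Two of your side remarks are worth keeping: the paper asserts $D\ge 0$ without proof (it is automatic once the quadratic is known to have the real root $s$, but your direct argument via $\alpha(F_{\mathcal{A}})\le\|\mathcal{A}\|_{\infty}^{1/(m-1)}$ is cleaner), and you correctly note that the index maximising $(u-z)_{i}(\mathcal{A}(u-z)^{m-1})_{i}$, as written in the theorem statement, need not coincide with the index maximising $(u-z)_{i}(F_{\mathcal{A}}(u-z))_{i}$ actually used in the proof.
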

\begin{proof}
Consider the TCP$(q,\mathcal{A})$, which is to find $z\in \mathbb{R}^n$ such that,
\begin{equation}
  z \geq 0, \;\; \mathcal{A}z^{m-1} +q \geq 0, \;\; z^T(\mathcal{A}z^{m-1} +q) =0.
\end{equation}
Let the solution set of TCP$(q,\mathcal{A})$ be denoted by $SOL(\mathcal{A},q)= \{ z\in \mathbb{R}^n : \; z \geq 0, \; \mathcal{A}z^{m-1} +q \geq 0, \; z^T(\mathcal{A}z^{m-1} +q) =0 \}$. Let $z \in SOL(\mathcal{A},q).$ Then for $w=(\mathcal{A}z^{m-1} +q)^{[\frac{1}{m-1}]} \geq 0,$ we have
\begin{equation}
    z \geq 0, \;\;\;\; w \geq 0, \;\;\;\; z^Tw= 0.
\end{equation}
Let $v=u- \max \{ 0, u-[(\mathcal{A}(u-z)^{m-1})^{[\frac{1}{m-1}]} + (\mathcal{A}z^{m-1} + q)^{[\frac{1}{m-1}]} ]\}.$ Now we consider the vector $y= u-v$ and $x=  (\mathcal{A}(u-z)^{m-1})^{[\frac{1}{m-1}]} + (\mathcal{A}z^{m-1} + q)^{[\frac{1}{m-1}]} -v $.
Then
\begin{center}
   $y= u-v =\max \{ 0, u-[(\mathcal{A}(u-z)^{m-1})^{[\frac{1}{m-1}]} + (\mathcal{A}z^{m-1} + q)^{[\frac{1}{m-1}]} ]\} \geq 0.$ 
\end{center}
Again,
\begin{align*}
    x &=(\mathcal{A}(u-z)^{m-1})^{[\frac{1}{m-1}]} + (\mathcal{A}z^{m-1} + q)^{[\frac{1}{m-1}]}) -v\\
      &=(\mathcal{A}(u-z)^{m-1})^{[\frac{1}{m-1}]} + (\mathcal{A}z^{m-1} + q)^{[\frac{1}{m-1}]} -u\\
      & \quad \quad+\max \{ 0, u-[(\mathcal{A}(u-z)^{m-1})^{[\frac{1}{m-1}]} + (\mathcal{A}z^{m-1} + q)^{[\frac{1}{m-1}]} ]\}\\
      &=\max \{ 0, [(\mathcal{A}(u-z)^{m-1})^{[\frac{1}{m-1}]} + (\mathcal{A}z^{m-1} + q)^{[\frac{1}{m-1}]} ]-u \} \geq 0.
\end{align*}

\NI Also by the construction of the vectors $y$ and $x$ we have $y_i x_i =0,\;\forall\; i\in [n.]$
Thus the vectors $y$ and $x$ satisfy the following inequalities and complementarity condition. 
\begin{equation}
    y \geq 0, \;\;\;\; x \geq 0, \;\;\;\; y^Tx= 0.
\end{equation}
Then for $i\in [n],$
\begin{equation}\label{error bound proof 1}
    (y-z)_i(x-w)_i = y_i x_i - z_i x_i - y_i w_i + z_i w_i = -z_ix_i -y_iw_i \leq 0.
\end{equation}
Again
\begin{align*}
    (y-z)_i(x-w)_i & = (u-v-z)_i((\mathcal{A}(u-z)^{m-1})^{[\frac{1}{m-1}]} -v )_i \\
      & = (u-z)_i((\mathcal{A}(u-z)^{m-1})^{[\frac{1}{m-1}]})_i -v_i((\mathcal{A}(u-z)^{m-1})^{[\frac{1}{m-1}]})_i \\
       & -(u-z)_iv_i + v^2_i \\
      & \leq  0 \;\;\mbox{ by (\ref{error bound proof 1}).} 
\end{align*}
Hence for each $i$ we obtain
\begin{equation}\label{connecting equation}
    (u-z)_i ((\mathcal{A}(u-z)^{m-1})^{[\frac{1}{m-1}]})_i \leq v_i((\mathcal{A}(u-z)^{m-1})^{[\frac{1}{m-1}]})_i + (u-z)_iv_i - v^2_i .
\end{equation}
As $\mathcal{A}$ is a $P$-tensor, $\max_{i\in [n]} \{ x_i(\mathcal{A}x^{m-1})_i \} > 0, \; \forall \; x \; \in \mathbb{R}^n\backslash \{0\}  $.
Let $t$ be the particular index for which 
\begin{equation}\label{crutial inequality of error bound}
    \max_{i \in [n]} \{ (u-z)_i ((\mathcal{A}(u-z)^{m-1})^{[\frac{1}{m-1}]})_i\} = (u-z)_t ((\mathcal{A}(u-z)^{m-1})^{[\frac{1}{m-1}]})_t \geq 0 .
\end{equation}

\NI Now using from (\ref{imp inequation in error bound}) we have
\begin{equation}\label{comparison of norm}
    ||\mathcal{A}(u-z)^{m-1}||_\infty^{\frac{1}{m-1}} \leq ||\mathcal{A}||_\infty^{\frac{1}{m-1}} ||u-z||_\infty .
\end{equation}

\NI Again from Equation 14 of \cite{zheng2019global} we have,
\begin{equation}\label{comparison of norm with alphaF}
    \alpha(F_{\mathcal{A}})||z-u||_\infty^2 \leq \max_{i \in [n]} \{ (u-z)_i (\mathcal{A}(u-z)^{m-1})^{[\frac{1}{m-1}]}_i\}.
\end{equation}

\NI Then from (\ref{connecting equation}) (\ref{crutial inequality of error bound}), (\ref{comparison of norm}) and (\ref{comparison of norm with alphaF}) we obtain, 
\begin{align*}
   \alpha(F_{\mathcal{A}})||z-u||_\infty^2 & \leq \max_{i \in [n]} \{ (u-z)_i (\mathcal{A}(u-z)^{m-1})^{[\frac{1}{m-1}]}_i\}\\
     & \leq (u-z)_tv_t + v_t((\mathcal{A}(u-z)^{m-1})^{[\frac{1}{m-1}]})_t - v^2_t\\
     & \leq ||u-z||_{\infty} ||v||_{\infty} + ||\mathcal{A}(u-z)^{m-1}||_\infty^{\frac{1}{m-1}} ||v||_{\infty} - v_t^2 \\
     & \leq ||v||_{\infty} ||u-z||_{\infty} + ||\mathcal{A}||_\infty^{\frac{1}{m-1}} ||u-z||_\infty ||v||_{\infty} - v_t^2 \\
     & = (1+ ||\mathcal{A}||_{\infty}^{\frac{1}{m-1}}) ||v||_{\infty} ||u-z||_\infty - v_t^2 .
\end{align*}

\NI Therefore, 
\begin{equation}\label{main equation}
    \alpha(F_{\mathcal{A}})||z-u||_\infty^2 \leq (1+ ||\mathcal{A}||_{\infty}^{\frac{1}{m-1}}) ||v||_{\infty} ||u-z||_\infty - v_t^2
\end{equation}
from (\ref{main equation}) we observe that if $v_t=0$ then $u=z$, which means $u$ is the true solution. Thus if $v_t \neq 0$, then from (\ref{main equation}) we obtain,
\begin{equation}\label{main equation revised}
    \alpha(F_{\mathcal{A}})||z-u||_\infty^2  -(1+ ||\mathcal{A}||_{\infty}^{\frac{1}{m-1}}) ||v||_{\infty} ||u-z||_\infty + v_t^2 \leq 0 .
\end{equation}
Solving the above inequality we obtain the desired relation.
\end{proof}

Here we propose a relative error bound for the approximate solution for TCP$(q,\mathcal{A})$ provided that the involved tensor $\mathcal{A}$ is a $P$-tensor.

\begin{theorem}
Let $\mathcal{A} \in T_{m,n} $ be a $P$-tensor and $0\neq z\in \mathbb{R}^n$ be a solution of TCP$(q,\mathcal{A})$ and $u$ be an arbitrary vector in $\mathbb{R}^n$. If $(-q)_+ \neq 0,$ we have
\begin{equation}\label{equation of realtive error 1}
    \frac{||v||_\infty (1 + ||\mathcal{A}||_{\infty}^{\frac{1}{m-1}}) - \sqrt{D}}{2 ||(-q)_+||_{\infty}^{\frac{1}{m-1}}}
    \leq \frac{||z-u||_\infty}{||z||_\infty} \leq
    \frac{||\mathcal{A}||_{\infty}^{\frac{1}{m-1}} [||v||_\infty (1 + ||\mathcal{A}||_{\infty}^{\frac{1}{m-1}}) + \sqrt{D}]}{2\alpha (F_{\mathcal{A}}) ||(-q)_+||_{\infty}^{\frac{1}{m-1}}}
\end{equation}
where $v=u- \max \{ 0, u-[(\mathcal{A}(u-z)^{m-1})^{[\frac{1}{m-1}]} + (\mathcal{A}z^{m-1} + q)^{[\frac{1}{m-1}]} ]\}$,

\noindent$D = ||v||_{\infty}^2(1+ ||\mathcal{A}||_{\infty}^{\frac{1}{m-1}} )^2 -4\alpha(F_{\mathcal{A}}) v_t^2  \geq 0$ and $t$ satisfies $v_t \neq 0,$
\begin{equation*}
    (u-z)_t (\mathcal{A}(u-z)^{m-1})_t =\max_{i\in [n]} \{(u-z)_i (\mathcal{A}(u-z)^{m-1})_i \}.
\end{equation*}
\end{theorem}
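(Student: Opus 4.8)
The plan is to obtain this relative error bound simply by dividing the absolute error bound of Theorem~\ref{theorem main result} by the two-sided solution-norm estimate of Theorem~\ref{for solution bound 2}. Since $\mathcal{A}$ is a $P$-tensor it has even order, so $F_{\mathcal{A}}$ and $\alpha(F_{\mathcal{A}})$ are well defined and $\alpha(F_{\mathcal{A}})>0$; together with the hypothesis $(-q)_+\neq 0$, which forces $||(-q)_+||_\infty^{\frac{1}{m-1}}>0$, this guarantees that every denominator occurring below is strictly positive. The hypothesis $z\neq 0$ (which is in fact automatic once $(-q)_+\neq 0$, by the left inequality of Theorem~\ref{for solution bound 2}) ensures that the ratio $||z-u||_\infty/||z||_\infty$ makes sense.

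First I would apply Theorem~\ref{theorem main result} to the present $z$, $u$ and the associated $v$, $D$, $t$, obtaining
\[
\frac{||v||_\infty (1 + ||\mathcal{A}||_{\infty}^{\frac{1}{m-1}}) - \sqrt{D}}{2\alpha (F_{\mathcal{A}})} \leq ||z-u||_\infty \leq \frac{||v||_\infty (1 + ||\mathcal{A}||_{\infty}^{\frac{1}{m-1}}) + \sqrt{D}}{2\alpha (F_{\mathcal{A}})}.
\]
From the definition $D = ||v||_{\infty}^2(1+ ||\mathcal{A}||_{\infty}^{\frac{1}{m-1}})^2 - 4\alpha(F_{\mathcal{A}}) v_t^2$ one reads off $D \le ||v||_{\infty}^2(1+ ||\mathcal{A}||_{\infty}^{\frac{1}{m-1}})^2$, hence $\sqrt{D}\le ||v||_\infty(1+||\mathcal{A}||_\infty^{\frac{1}{m-1}})$, so the numerator in the lower estimate above is nonnegative; this is the one point that has to be checked before dividing, since otherwise the direction of the inequality could be spoiled. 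Next I would invoke Theorem~\ref{for solution bound 2}, which applies because $m$ is even, to get
\[
\frac{||(-q)_+||_{\infty}^{\frac{1}{m-1}}}{||\mathcal{A}||_{\infty}^{\frac{1}{m-1}}} \leq ||z||_\infty \leq \frac{||(-q)_+||_{\infty}^{\frac{1}{m-1}}}{\alpha(F_{\mathcal{A}})}.
\]

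To derive the upper bound for the ratio I would divide the upper estimate for $||z-u||_\infty$ by the lower estimate for $||z||_\infty$; the result is exactly the right-hand side of the claimed inequality (one factor of $||\mathcal{A}||_\infty^{\frac{1}{m-1}}$ enters from inverting the lower bound on $||z||_\infty$, while $\alpha(F_{\mathcal{A}})$ stays in the denominator). For the lower bound I would divide the nonnegative lower estimate for $||z-u||_\infty$ by the upper estimate for $||z||_\infty$; here the two occurrences of $\alpha(F_{\mathcal{A}})$ cancel, leaving $\dfrac{||v||_\infty (1 + ||\mathcal{A}||_{\infty}^{\frac{1}{m-1}}) - \sqrt{D}}{2 ||(-q)_+||_{\infty}^{\frac{1}{m-1}}}$, as claimed.

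I do not expect any genuine obstacle here: the argument is pure bookkeeping, pairing each one-sided bound on the numerator $||z-u||_\infty$ with the correctly oriented one-sided bound on the denominator $||z||_\infty$. The only subtleties are (i) the nonnegativity of $||v||_\infty(1+||\mathcal{A}||_\infty^{\frac{1}{m-1}})-\sqrt{D}$ noted above, and (ii) making sure the hypotheses ``$(-q)_+\neq 0$'' and ``$\mathcal{A}$ a $P$-tensor'' are used precisely so that $\alpha(F_{\mathcal{A}})>0$ and $||(-q)_+||_\infty>0$, keeping all denominators away from zero.
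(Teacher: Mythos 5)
Your proposal is correct and follows essentially the same route as the paper: the paper likewise divides the absolute error bound of Theorem \ref{theorem main result} by the (inverted) solution-norm bound of Theorem \ref{for solution bound 2}, pairing the bounds exactly as you do. Your explicit check that $\sqrt{D}\leq ||v||_\infty(1+||\mathcal{A}||_\infty^{\frac{1}{m-1}})$, so the lower numerator is nonnegative, is a small point the paper leaves implicit.
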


\begin{proof}
From Theorem \ref{for solution bound 2} we have inequation (\ref{equation for solution bound}). Since $(-q)_+ \neq 0,$ and $z\neq 0$ the inequation (\ref{equation for solution bound}) gives
\begin{equation}\label{equation for inverse of solution bound}
\frac{||\mathcal{A}||_{\infty}^{\frac{1}{m-1}}}{||(-q)_+||_{\infty}^{\frac{1}{m-1}}}
    \leq \frac{1}{||z||_{\infty}} \leq 
    \frac{\alpha(F_{\mathcal{A}})}{||(-q)_+||_{\infty}^{\frac{1}{m-1}}}.
\end{equation}
From Theorem \ref{theorem main result} we have inequation \ref{equation of main result}.
Now combining inequations (\ref{equation of main result}) and (\ref{equation for inverse of solution bound}) we obtain the desired result.
\end{proof}

Here we consider a positive diagonal tensor of even order which is eventually a $P$-tensor, and find a property of the quantity $\alpha(F_{\mathcal{A}}).$

\begin{lemma}\label{lemma FA}
Let $\mathcal{A}\in T_{m,n}$ be a positive diagonal tensor, where $m$ is even. Then $\alpha(F_{\mathcal{A}})= \min_{i\in [n]}\{(a_{ii \cdots i})^{\frac{1}{m-1}} \}$.
Where $a_{ii \dots i}$ denote the main diagonal elements of $\mathcal{A}$, for $i=1, 2, ... ,n $.
\end{lemma}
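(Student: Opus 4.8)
The plan is to compute $\alpha(F_{\mathcal{A}})$ directly from its definition \eqref{defn of alphaFA} by exploiting the simple structure of a positive diagonal tensor. For a positive diagonal tensor $\mathcal{A}$ with diagonal entries $a_{ii\cdots i}>0$, the $i$th component of $\mathcal{A}x^{m-1}$ collapses to $a_{ii\cdots i}\,x_i^{m-1}$, so the operator $F_{\mathcal{A}}$ acts coordinatewise: $(F_{\mathcal{A}}x)_i = (a_{ii\cdots i})^{1/(m-1)} x_i$. Consequently $x_i (F_{\mathcal{A}}x)_i = (a_{ii\cdots i})^{1/(m-1)} x_i^2$, which is always $\geq 0$; this is the crucial simplification that makes the $\max_i$ easy to control.

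First I would fix $x$ with $\|x\|_\infty = 1$. Then $\max_{i\in[n]} x_i(F_{\mathcal{A}}x)_i = \max_{i\in[n]} (a_{ii\cdots i})^{1/(m-1)} x_i^2$. Since some coordinate $j$ has $x_j^2 = 1$ and $m$ is even (so $m-1$ is odd and the $(m-1)$th root is well-defined and positive), this maximum is at least $(a_{jj\cdots j})^{1/(m-1)} \geq \min_{i\in[n]} (a_{ii\cdots i})^{1/(m-1)}$. Hence $\max_i x_i(F_{\mathcal{A}}x)_i \geq \min_i (a_{ii\cdots i})^{1/(m-1)}$ for every $x$ on the unit sphere, giving $\alpha(F_{\mathcal{A}}) \geq \min_i (a_{ii\cdots i})^{1/(m-1)}$.

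For the reverse inequality, let $k$ be an index attaining $\min_i (a_{ii\cdots i})^{1/(m-1)}$ and test with $x = e_k$, the $k$th standard basis vector, which has $\|e_k\|_\infty = 1$. Then $(e_k)_i (F_{\mathcal{A}} e_k)_i = (a_{ii\cdots i})^{1/(m-1)} (e_k)_i^2$ equals $(a_{kk\cdots k})^{1/(m-1)}$ when $i=k$ and $0$ otherwise, so $\max_i (e_k)_i(F_{\mathcal{A}}e_k)_i = (a_{kk\cdots k})^{1/(m-1)} = \min_i (a_{ii\cdots i})^{1/(m-1)}$. Taking the minimum over all unit-norm vectors in the definition of $\alpha(F_{\mathcal{A}})$ then forces $\alpha(F_{\mathcal{A}}) \leq \min_i (a_{ii\cdots i})^{1/(m-1)}$. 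Combining the two inequalities yields the claimed equality.

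I do not anticipate a genuine obstacle here; the only points requiring minor care are checking that $F_{\mathcal{A}}$ really is coordinatewise for a diagonal tensor (i.e. confirming that all off-diagonal terms in the sum defining $(\mathcal{A}x^{m-1})_i$ vanish) and noting that evenness of $m$ guarantees $a_{ii\cdots i}^{1/(m-1)}$ is a well-defined positive real so that the $x_i^2$ factorization is legitimate. Everything else is a direct evaluation of the min–max.
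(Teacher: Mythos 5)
Your proof is correct, and the lower-bound half ($\alpha(F_{\mathcal{A}})\geq \min_i (a_{ii\cdots i})^{1/(m-1)}$) is essentially identical to the paper's: both reduce $x_i(F_{\mathcal{A}}x)_i$ to $(a_{ii\cdots i})^{1/(m-1)}x_i^2$ and use the fact that some coordinate of a unit-$\ell_\infty$ vector has $x_j^2=1$ (you actually spell out this last step, which the paper leaves implicit in a somewhat loosely written chain of inequalities). Where you genuinely diverge is the upper bound: the paper obtains $\alpha(F_{\mathcal{A}})\leq \min_i (a_{ii\cdots i})^{1/(m-1)}$ by citing Theorem 4.2 of \cite{song2015error}, whereas you prove it directly by testing the defining minimum with the standard basis vector $e_k$ at a minimizing diagonal index $k$, for which $\max_i (e_k)_i(F_{\mathcal{A}}e_k)_i=(a_{kk\cdots k})^{1/(m-1)}$. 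Your route is self-contained and more elementary, at the cost of nothing in this diagonal setting; the paper's citation buys brevity and connects the lemma to a general bound ($\alpha(F_{\mathcal{A}})$ dominated by the diagonal data) that holds beyond diagonal tensors.
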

\begin{proof}
Let $x\in \mathbb{R}^n$ be an arbitrary vector satisfied with $||x||_{\infty}=1.$ Then we have 
\begin{align*}
    \max_{i\in [n]} \{ x_i (F_{\mathcal{A}(x)})_i \} & = \max_{i\in [n]} \{ x_i (\mathcal{A}x^{m-1})^{[\frac{1}{m-1}]}_i \} \\
     & = \max_{||x||_{\infty} = 1} \{ x_i (a_{ii \dots i}x_i^{m-1})^{\frac{1}{m-1}}, \; i=1,2,\dots n \}\\
     & = \max_{||x||_{\infty} = 1} \{ (a_{ii \cdots i})^{\frac{1}{m-1}} x_i^2 , \; i=1,2,\dots n \}\\
     & \geq \min_{i\in [n]} \{ (a_{ii \cdots i})^{\frac{1}{m-1}} \} \; >0 .
\end{align*}
Thus 
\begin{equation}\label{geq FA}
\alpha(F_{\mathcal{A}}) = \min_{||x||_{\infty} = 1} \max_{i\in [n] } \{ x_i (F_{\mathcal{A}} (x))_i \} \geq \min_{i\in [n]} \{ (a_{ii \cdots i})^{\frac{1}{m-1}} \} .
\end{equation}
On the other hand by Theorem 4.2 of \cite{song2015error} we have
\begin{equation}\label{leq FA}
   \alpha(F_{\mathcal{A}}) \leq  \min_{i\in [n]} \{ (a_{ii \cdots i})^{\frac{1}{m-1}} \} .
\end{equation}
Thus from (\ref{geq FA}) and (\ref{leq FA}) we have, 
\begin{equation}
    \alpha(F_{\mathcal{A}}) = \min_{i\in [n]} \{ (a_{ii \cdots i})^{\frac{1}{m-1}} \} .
\end{equation}
\end{proof}

Here we present solution bound, absolute and relative error bound of the solution of TCP$(q, \mathcal{A}),$ where $\mathcal{A}$ is an even order positive diagonal tensor.

\begin{theorem}\label{corollary for error bound with positive diagonal tensor}
Let $\mathcal{A}\in T_{m,n}$ be a positive diagonal tensor and $m$ be even. Let $z\in \mathbb{R}^n$ be a solution of the TCP$(q,\mathcal{A})$ and $u$ be an arbitrary vector in $\mathbb{R}^n$. Then, 
\begin{align*}
    \frac{||v_1||_\infty (1 + (\max_{i\in [n]} \{ a_{ii \cdots i}\})^{\frac{1}{m-1}} ) - \sqrt{D_1}}{2\min_{i\in [n]} \{ (a_{ii \cdots i})^{\frac{1}{m-1}} \}} 
    & \leq ||z-u||_\infty \\
    & \leq \frac{||v_1||_\infty (1 + (\max_{i\in [n]} \{ a_{ii \cdots i}\})^{\frac{1}{m-1}} ) + \sqrt{D_1}}{2\min_{i\in [n]} \{ (a_{ii \cdots i})^{\frac{1}{m-1}} \}}
\end{align*}
and 
\begin{equation*}
   \frac{||(-q)_+||_{\infty}^{\frac{1}{m-1}}}{(\max_{i\in [n]} \{ a_{ii \cdots i}\})^{\frac{1}{m-1}}} 
    \leq ||z||_{\infty} \leq 
    \frac{||(-q)_+||_{\infty}^{\frac{1}{m-1}}}{\min_{i\in [n]} \{ (a_{ii \cdots i})^{\frac{1}{m-1}}\}},
\end{equation*}
where $v_1 =u -\max \{ 0, u-[(\mathcal{A}(u-z)^{m-1})^{[\frac{1}{m-1}]} + (\mathcal{A}z^{m-1} + q)^{[\frac{1}{m-1}]} ]\}$,
$D_1 = ||v_1||_{\infty}^2(1+ (\max_{i\in [n]} \{ a_{ii \cdots i}\})^{\frac{1}{m-1}})^2 -4(\min_{i\in [n]} \{ (a_{ii \cdots i})^{\frac{1}{m-1}}) v_t^2  \geq 0$ and $t$ satisfies $\max_{i\in [n]} \{(u-z)_i (\mathcal{A}(u-z)^{m-1})_i \} = (u-z)_t (\mathcal{A}(u-z)^{m-1})_t \geq 0 $ and $v_t \neq 0 $.
\end{theorem}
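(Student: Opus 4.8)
The plan is to derive both chains of inequalities by specializing the general results already established---Theorem \ref{theorem main result} for the two-sided bound on $\|z-u\|_\infty$ and Theorem \ref{for solution bound 2} for the bound on $\|z\|_\infty$---to the diagonal setting, using Lemma \ref{lemma FA} to evaluate $\alpha(F_{\mathcal{A}})$ in closed form.

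First I would record the two elementary facts that make the specialization legitimate. Since $\mathcal{A}$ is a positive diagonal tensor and $m$ is even, for every $x\in\mathbb{R}^n\setminus\{0\}$ and every index $i$ with $x_i\neq 0$ we have $x_i(\mathcal{A}x^{m-1})_i = a_{ii\cdots i}\,x_i^m > 0$; hence $\mathcal{A}$ is a $P$-tensor, so Theorems \ref{theorem main result} and \ref{for solution bound 2} are applicable. Second, because all off-diagonal entries of $\mathcal{A}$ vanish and the diagonal entries are positive,
\[
\|\mathcal{A}\|_\infty = \max_{i\in[n]} \sum_{i_2,\dots,i_m=1}^n |a_{ii_2\cdots i_m}| = \max_{i\in[n]} a_{ii\cdots i},
\]
so that $\|\mathcal{A}\|_\infty^{\frac{1}{m-1}} = \big(\max_{i\in[n]} a_{ii\cdots i}\big)^{\frac{1}{m-1}}$.

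Next I would invoke Lemma \ref{lemma FA}, which gives $\alpha(F_{\mathcal{A}}) = \min_{i\in[n]} (a_{ii\cdots i})^{\frac{1}{m-1}}$. Substituting this value of $\alpha(F_{\mathcal{A}})$ and the above expression for $\|\mathcal{A}\|_\infty^{\frac{1}{m-1}}$ into the statement of Theorem \ref{theorem main result} turns $v$ into $v_1$, turns $D$ into $D_1$, and turns (\ref{equation of main result}) into exactly the claimed two-sided estimate for $\|z-u\|_\infty$; the nonnegativity condition and the characterization of the index $t$ transfer verbatim. Performing the same two substitutions in (\ref{equation for solution bound}) of Theorem \ref{for solution bound 2} immediately yields the stated bound on $\|z\|_\infty$.

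There is no real analytic obstacle: once $\alpha(F_{\mathcal{A}})$ and $\|\mathcal{A}\|_\infty$ have been identified, the argument is a direct transcription of the two quoted theorems. The only step demanding a moment's attention is the evaluation of $\|\mathcal{A}\|_\infty$ for a diagonal tensor---in particular the observation that the row-sum maximum is attained at the single surviving diagonal entry, for which positivity gives $|a_{ii\cdots i}| = a_{ii\cdots i}$---together with the verification that $\mathcal{A}$ is a $P$-tensor so that the hypotheses of Theorems \ref{theorem main result} and \ref{for solution bound 2} are met.
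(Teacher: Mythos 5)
Your proposal is correct and matches the paper's own proof essentially verbatim: the paper likewise observes that an even-order positive diagonal tensor is a $P$-tensor with $||\mathcal{A}||_{\infty}=\max_{i\in[n]}a_{ii\cdots i}$, invokes Lemma \ref{lemma FA} for $\alpha(F_{\mathcal{A}})=\min_{i\in[n]}(a_{ii\cdots i})^{\frac{1}{m-1}}$, and substitutes these values into Theorems \ref{for solution bound 2} and \ref{theorem main result}. Your additional explicit justifications (the $P$-tensor check via $x_i(\mathcal{A}x^{m-1})_i=a_{ii\cdots i}x_i^m>0$ and the row-sum computation of $||\mathcal{A}||_{\infty}$) are correct and, if anything, slightly more careful than the paper's one-line assertions.
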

\begin{proof}
$\mathcal{A}$ is positive diagonal tensor of even order then $ a_{ii \cdots i}>0, \forall i\in [n]$ and all other entries of $\mathcal{A}$ are zeros. Then $\mathcal{A}$ is a $P$-tensor and $||\mathcal{A}||_{\infty} = \max_{i\in [n]} \{ a_{ii \cdots i} \}$ also from Lemma \ref{lemma FA}, $\alpha(F_{\mathcal{A}}) = \min_{i\in [n]} \{ (a_{ii \cdots i})^{\frac{1}{m-1}} \}.$
In Theorem \ref{for solution bound 2}, putting the values of $\alpha(F_{\mathcal{A}})$ and $||\mathcal{A}||_{\infty}$ in (\ref{equation for solution bound}) we obtain
\begin{equation}\label{equation for solution bound with positive diag}
   \frac{||(-q)_+||_{\infty}^{\frac{1}{m-1}}}{(\max_{i\in [n]} \{ a_{ii \cdots i}\})^{\frac{1}{m-1}}} 
    \leq ||z||_{\infty} \leq 
    \frac{||(-q)_+||_{\infty}^{\frac{1}{m-1}}}{\min_{i\in [n]} \{ (a_{ii \cdots i})^{\frac{1}{m-1}}\}}.
\end{equation}
In Theorem \ref{theorem main result}, putting the values of $\alpha(F_{\mathcal{A}})$ and $||\mathcal{A}||_{\infty}$ in (\ref{equation of main result}) we obtain
\begin{align}\label{equation for error bound with positive diag}
    \frac{||v_1||_\infty (1 + (\max_{i\in [n]} \{ a_{ii \cdots i}\})^{\frac{1}{m-1}} ) - \sqrt{D_1}}{2\min_{i\in [n]} \{ (a_{ii \cdots i})^{\frac{1}{m-1}} \}} 
    &\leq ||z-u||_\infty \\
    &\leq   
    \frac{||v_1||_\infty (1 + (\max_{i\in [n]} \{ a_{ii \cdots i}\})^{\frac{1}{m-1}} ) + \sqrt{D_1}}{2\min_{i\in [n]} \{ (a_{ii \cdots i})^{\frac{1}{m-1}}\}} ,
\end{align}
where $v_1=u -\max \{ 0, u-[(\mathcal{A}(u-z)^{m-1})^{[\frac{1}{m-1}]} + (\mathcal{A}z^{m-1} + q)^{[\frac{1}{m-1}]} ]\}$, 
$D_1 = ||v_1||_{\infty}^2(1+ (\max_{i\in [n]} \{ a_{ii \cdots i}\})^{\frac{1}{m-1}})^2 -4(\min_{i\in [n]} \{ (a_{ii \cdots i})^{\frac{1}{m-1}}) v_t^2  \geq 0$ and $t$ satisfies $\max_{i\in [n]} \{(u-z)_i (\mathcal{A}(u-z)^{m-1})_i \} = (u-z)_t (\mathcal{A}(u-z)^{m-1})_t \geq 0 $ and $v_t \neq 0 $.
\end{proof}

\begin{theorem}
Let $\mathcal{A} \in T_{m,n} $ be a positive diagonal tensor of even order, $0\neq z\in \mathbb{R}^n$ be a solution of TCP$(q,\mathcal{A})$ and $u$ be an arbitrary vector in $\mathbb{R}^n$. If $(-q)_+ \neq 0$ then
\begin{align*}
    & \frac{||v_1||_\infty (1 + (\max_{i\in [n]} \{ a_{ii \cdots i}\})^{\frac{1}{m-1}} ) - \sqrt{D_1}}{2 ||(-q)_+||_{\infty}^{\frac{1}{m-1}}}\\
    &\leq \frac{||z-u||_\infty}{||z||_\infty}\\ 
    &\leq 
        \frac{(\max_{i\in [n]} \{ a_{ii \cdots i}\})^{\frac{1}{m-1}}(||v_1||_\infty (1 + (\max_{i\in [n]} \{ a_{ii \cdots i}\})^{\frac{1}{m-1}} ) + \sqrt{D_1})}{2\min_{i\in [n]} \{ (a_{ii \cdots i})^{\frac{1}{m-1}}\} ||(-q)_+||_{\infty}^{\frac{1}{m-1}}}
\end{align*}
where $v_1=u -\max \{ 0, u-[(\mathcal{A}(u-z)^{m-1})^{[\frac{1}{m-1}]} + (\mathcal{A}z^{m-1} + q)^{[\frac{1}{m-1}]} ]\}$,
$D_1 = ||v_1||_{\infty}^2(1+ (\max_{i\in [n]} \{ a_{ii \cdots i}\})^{\frac{1}{m-1}})^2 -4(\min_{i\in [n]} \{ (a_{ii \cdots i})^{\frac{1}{m-1}}) v_t^2  \geq 0$ and $t$ satisfies  $\max_{i\in [n]} \{(u-z)_i (\mathcal{A}(u-z)^{m-1})_i \} = (u-z)_t (\mathcal{A}(u-z)^{m-1})_t \geq 0 $ and $v_t \neq 0 $.
\end{theorem}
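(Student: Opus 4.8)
The plan is to obtain this statement as a direct specialization of the general relative error bound in inequality (\ref{equation of realtive error 1}); equivalently, one may assemble it from the absolute error bound and the solution bound already recorded for even order positive diagonal tensors in Theorem \ref{corollary for error bound with positive diagonal tensor}.

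First I would note that an even order positive diagonal tensor $\mathcal{A}$ is a $P$-tensor with $||\mathcal{A}||_\infty = \max_{i\in[n]}\{a_{ii\cdots i}\}$, hence $||\mathcal{A}||_\infty^{1/(m-1)} = (\max_{i\in[n]}\{a_{ii\cdots i}\})^{1/(m-1)}$, and that by Lemma \ref{lemma FA} one has $\alpha(F_{\mathcal{A}}) = \min_{i\in[n]}\{(a_{ii\cdots i})^{1/(m-1)}\}$. Since $z\neq 0$ and $(-q)_+\neq 0$ by hypothesis, the hypotheses of the theorem that produced (\ref{equation of realtive error 1}) are met, so substituting these two expressions for $||\mathcal{A}||_\infty$ and $\alpha(F_{\mathcal{A}})$ into (\ref{equation of realtive error 1}) --- which turns the vector $v$ there into $v_1$ and the quantity $D$ into $D_1$ --- yields precisely the asserted chain of inequalities.

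If a self-contained derivation is preferred, I would instead start from Theorem \ref{corollary for error bound with positive diagonal tensor}, which already gives for these tensors both the absolute bound on $||z-u||_\infty$ and the solution bound $||(-q)_+||_\infty^{1/(m-1)}/(\max_i a_{ii\cdots i})^{1/(m-1)} \leq ||z||_\infty \leq ||(-q)_+||_\infty^{1/(m-1)}/\min_i (a_{ii\cdots i})^{1/(m-1)}$. Taking reciprocals in the latter (legitimate because $a_{ii\cdots i}>0$, $z\neq 0$ and $(-q)_+\neq 0$ make every term strictly positive) gives two-sided bounds on $1/||z||_\infty$; multiplying the lower absolute bound by the lower reciprocal bound makes the factor $\min_i(a_{ii\cdots i})^{1/(m-1)}$ cancel and reproduces the left-hand side (in the harmless case where the absolute lower bound is negative the claimed inequality is trivial, the left ratio being nonnegative), while multiplying the upper absolute bound --- which is nonnegative since $||v_1||_\infty\geq 0$ --- by the upper reciprocal bound reproduces the right-hand side.

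I do not expect a genuine obstacle. The only points worth a line of care are: confirming that all denominators are strictly positive; observing that $D_1\geq 0$ is automatic, being the discriminant of a quadratic in $||z-u||_\infty$ that is already known to possess the real root $||z-u||_\infty$ (exactly as in the proof of Theorem \ref{theorem main result}); and checking that the index $t$ and the vector $v_1$ are the same objects appearing in Theorem \ref{corollary for error bound with positive diagonal tensor}. Everything else is the routine substitution-and-multiplication bookkeeping that mirrors the proof of (\ref{equation of realtive error 1}).
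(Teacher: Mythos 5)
Your proposal is correct and follows essentially the same route as the paper: specialize $||\mathcal{A}||_\infty = \max_{i\in[n]}\{a_{ii\cdots i}\}$ and $\alpha(F_{\mathcal{A}}) = \min_{i\in[n]}\{(a_{ii\cdots i})^{1/(m-1)}\}$ (via Lemma \ref{lemma FA}), then combine the absolute error bound of Theorem \ref{corollary for error bound with positive diagonal tensor} with the reciprocal of the solution bound. If anything, your write-up is more careful than the paper's, since you orient the reciprocal inequalities correctly and note the sign issue when the lower absolute bound could be negative.
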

\begin{proof}
Let $\mathcal{A}\in T_{m,n}$ be a positive diagonal tensor and $z\in \mathbb{R}^n$ be a solution of TCP$(q,\mathcal{A})$ and let $u$ be an arbitrary vector in $\mathbb{R}^n.$ Then by Theorem \ref{corollary for error bound with positive diagonal tensor}, the inequations (\ref{equation for solution bound with positive diag}) and (\ref{equation for error bound with positive diag}) hold. Since $(-q)_+ \neq 0$ and $z\neq 0$ from (\ref{equation for solution bound with positive diag}) we have 
\begin{equation}\label{equation for inverse of solution bound with positive diag}
       \frac{(\max_{i\in [n]} \{ a_{ii \cdots i}\})^{\frac{1}{m-1}}} {||(-q)_+||_{\infty}^{\frac{1}{m-1}}}
    \leq \frac{1}{||z||_{\infty}} \leq 
    \frac{\min_{i\in [n]} \{ (a_{ii \cdots i})^{\frac{1}{m-1}}\}}{||(-q)_+||_{\infty}^{\frac{1}{m-1}}}.
\end{equation}
Now combining inequations (\ref{equation for error bound with positive diag}) and (\ref{equation for inverse of solution bound with positive diag}) we obtain
\begin{align}\label{equation for relative error with positive diag}
    &\frac{||v_1||_\infty (1 + (\max_{i\in [n]} \{ a_{ii \cdots i}\})^{\frac{1}{m-1}} ) - \sqrt{D_1}}{2 ||(-q)_+||_{\infty}^{\frac{1}{m-1}}}\\
    &\leq \frac{||z-u||_\infty}{||z||_\infty}\\
    &\leq 
        \frac{(\max_{i\in [n]} \{ a_{ii \cdots i}\})^{\frac{1}{m-1}}(||v_1||_\infty (1 + (\max_{i\in [n]} \{ a_{ii \cdots i}\})^{\frac{1}{m-1}} ) + \sqrt{D_1})}{2\min_{i\in [n]} \{ (a_{ii \cdots i})^{\frac{1}{m-1}}\} ||(-q)_+||_{\infty}^{\frac{1}{m-1}}}.
\end{align}
\end{proof}

Here we give an numerical example to illustrate the result.
\subsubsection{Numerical Example}
Consider the tensor $\mathcal{A}\in T_{4,2}$ such that $a_{1111}=1, \; a_{2222}=8.$ For $z=\left( \begin{array}{c}
     z_1 \\
     z_2
\end{array} \right) \in \mathbb{R}^2,$ $\mathcal{A}z^2= \left( \begin{array}{c}
     z_1^3 \\
     8z_2^3
\end{array} \right) .$ Then $\mathcal{A}$ is a $P$-tensor. Let $q=\left( \begin{array}{c}
    1 \\
   -1
\end{array} \right) .$ Then the TCP$(q,\mathcal{A})$ is to find $z\in \mathbb{R}^2$ such that 
\begin{equation}\label{equation of example}
    z\geq 0, \;\; \mathcal{A}z^3 +q \geq 0, \;\; z^T (\mathcal{A}z^3 +q)=0.
\end{equation}
Solving (\ref{equation of example}) we have $z=\left( \begin{array}{c}
     0 \\
     .5
\end{array} \right).$ Then $\mathcal{A}z^3=\left( \begin{array}{c}
     0 \\
     1
\end{array} \right),$ $\mathcal{A}z^3 +q=\left( \begin{array}{c}
     1 \\
     0
\end{array} \right)$ and $(\mathcal{A}z^3 +q)^{\frac{1}{3}}=\left( \begin{array}{c}
     1 \\
     0
\end{array} \right).$ Now $||\mathcal{A}||_{\infty}=\max \{ 1,8 \}=8$, therefore $||\mathcal{A}||_{\infty}^{\frac{1}{3}}=2.$ Consider $u\in \mathbb{R}^n$ such that $u=\left( \begin{array}{c}
    0.5 \\
    0.4
\end{array} \right).$ Then $u-z=\left( \begin{array}{c}
     0.5 \\
     0.4
\end{array} \right),$ $\mathcal{A}(u-z)^3=\left( \begin{array}{c}
     0.125 \\
     0.008
\end{array} \right)$ and $(\mathcal{A}(u-z)^3)^{\frac{1}{3}}=\left( \begin{array}{c}
     0.5 \\
     0.8
\end{array} \right).$ Then $v=u -\max \{ 0, (\mathcal{A}(u-z)^{3})^{[\frac{1}{3}]} + (\mathcal{A}z^{3} + q)^{[\frac{1}{3}]} -u \}=$ 
$\left( \begin{array}{c}
     0.5 \\
     0.4
\end{array} \right)$ and $||v||_{\infty}=\max \{ 0.5, 0.4 \}=0.5.$ Now 
\begin{equation}
    \max_{i\in [2]} \{ (u-z)_i (\mathcal{A}(u-z)^3)_i\}= \{0.625, \; 0.0008\}=0.625,
\end{equation}
and the maximum occurs at $i=1$. Therefore $t =1$ and $v_t=v_1=0.5\;(\neq 0).$
Also
\begin{equation*}
    F_{\mathcal{A}}(z)= (\mathcal{A}z)^{\frac{1}{3}} = \left( \begin{array}{c}
     z_1 \\
    2 z_2
\end{array} \right).
\end{equation*}
Therefore $\alpha(F_{\mathcal{A}})= \min \{ 1,2 \}=1.$
Now
\begin{align*}
    D &= (0.5)^2 \cdot (1+2)^2 - 4 \cdot 1 \cdot (0.5)\\
    &= 0.25 >0 .
\end{align*}
Then upper bound of the error is $UB_{AKD} =\frac{0.5 \cdot (1+2) + \sqrt{0.25}}{2 \cdot 1}=1$ and the lower bound is $LB_{AKD} = \frac{0.5 \cdot (1+2)-\sqrt{0.25}}{2\cdot 1}= 0.5.$
But according to Theorem \ref{first proof of error bound} proposed by Zheng et al. in \cite{zheng2019global} the upper bound is $UB =\frac{(1+2)\cdot 2}{0.5}=1.5$ and the lower bound is $LB= \frac{1}{1+2} \cdot (0.5)=0.1667.$
Thus we see that $UB_{AKD} < UB$ and $LB < LB_{AKD}.$ Hence our error bound is improved.

\NI Obviously when $D \mapsto 0 $ the error bounds get better. Now we show that the error bound in Theorem \ref{theorem main result} is better than that of in Theorem \ref{first proof of error bound}.

\NI Here we are showing that our proposed error bound is sharper that the error bound proposed by Zheng et al. \cite{zheng2019global}.

\subsection{Comparison of upper bounds:}
The upper bound of error from inequation (\ref{first equation of error bound}) is $UB= ||\tilde{v}||_\infty  \cdot \frac{1 + ||\mathcal{A}||_{\infty}^{\frac{1}{m-1}}}{\alpha (F_{\mathcal{A}})}$ and the upper bound of error from inequation (\ref{equation of main result}) is $UB_{AKD}=\frac{||v||_\infty (1 + ||\mathcal{A}||_{\infty}^{\frac{1}{m-1}}) + \sqrt{D}}{2\alpha (F_{\mathcal{A}})}.$
Now we have $D = ||v||_{\infty}^2(1+ ||\mathcal{A}||_{\infty}^{[\frac{1}{m-1}]} )^2 -4\alpha(F_{\mathcal{A}}) v_t^2  \geq 0$ i.e., $4\alpha(F_{\mathcal{A}}) v_t^2 = ||v||_{\infty}^2(1+ ||\mathcal{A}||_{\infty}^{[\frac{1}{m-1}]} )^2 - D \geq 0,$ since $\alpha(F_{\mathcal{A}})\geq 0.$ Therefore 
\begin{equation}\label{equation for alg exp}
    (1+ ||\mathcal{A}||_{\infty}^{[\frac{1}{m-1}]} ) \geq \sqrt{\frac{D}{||v||_{\infty}^2}}.
\end{equation}
Note that by construction we have $||v|| \leq ||\tilde{v}||.$
By taking the ratio of the upper bounds we obtain,
\begin{align*}
   \frac{UB_{AKD}}{UB}= 
   \frac{\frac{||v||_\infty (1 + ||\mathcal{A}||_{\infty}^{\frac{1}{m-1}}) + \sqrt{D}}{2\alpha (F_{\mathcal{A}})}}{ ||\tilde{v}||_\infty  \cdot \frac{1 + ||\mathcal{A}||_{\infty}^{\frac{1}{m-1}}}{\alpha (F_{\mathcal{A}})} } \leq &\frac{(1 + ||\mathcal{A}||_{\infty}^{\frac{1}{m-1}}) + \sqrt{\frac{D}{||v||_{\infty}^2}}}{2 (1 + ||\mathcal{A}||_{\infty}^{\frac{1}{m-1}})}\\
    \leq & \frac{(1 + ||\mathcal{A}||_{\infty}^{\frac{1}{m-1}}) + (1 + ||\mathcal{A}||_{\infty}^{\frac{1}{m-1}})}{2 (1 + ||\mathcal{A}||_{\infty}^{\frac{1}{m-1}})} =1 \mbox{  by (\ref{equation for alg exp})}.
\end{align*}

\section{Conclusion}
In this article we introduce an error bound for the solution of tensor complementarity problem TCP$(q, \mathcal{A})$ given that $\mathcal{A}$ is a $P$-tensor and $q$ is a real vector. We establish a relative error bound for TCP with $P$-tensor. We find the value of $\alpha(F_{\mathcal{A}})$ for even order positive diagonal tensor. We establish absolute and relative error bound for TCP$(q, \mathcal{A})$ where $\mathcal{A}$ is an even order positive diagonal tensor. We prove that our proposed upper bound is sharper than the earlier upper bound of absolute error available in the literature. One numerical example is illustrated to support our result.

\section{Acknowledgment}
The author R. Deb is thankful to the Council of Scientific $\&$ Industrial Research (CSIR), India, Junior Research Fellowship scheme for financial support.
The author A. Dutta is thankful to the Department of Science and technology, Govt. of India, INSPIRE Fellowship Scheme for financial support.

\bibliographystyle{plain}
\bibliography{referencesall}

\end{document}